\titleformat{\section}{\normalfont\scshape\centering}{\thesection}{1em}{}
  \titleformat{\subsection}{\bfseries}{\thesubsection}{1em}{}
\newtheorem{theorem}{Theorem}[section]
\newtheorem{lemma}[theorem]{Lemma}
\newtheorem{proposition}[theorem]{Proposition}
\theoremstyle{definition}
\newtheorem{remark}[theorem]{Remark}
\numberwithin{equation}{section}
\renewcommand{\Re}{\textnormal{Re}}
\renewcommand{\Im}{\textnormal{Im}}
\renewcommand\P{\mathbf{P}}
\newcommand\n{\mathbf{n}}
\begin{document}

\title{On the M\"obius function in all short intervals}

\author{Kaisa Matom\"{a}ki}
\address{Department of Mathematics and Statistics, University of Turku, Turku, Finland}
\email{ksmato@utu.fi}

\author{Joni Ter\"{a}v\"{a}inen}
\address{Mathematical Institute, University of Oxford, Oxford, UK}
\email{joni.teravainen@maths.ox.ac.uk}

\begin{abstract}
We show that, for the M\"obius function $\mu(n)$, we have
\begin{align*}
\sum_{x < n\leq x+x^{\theta}}\mu(n)=o(x^{\theta})
\end{align*}
for any $\theta>0.55$. This improves on a result of Motohashi and Ramachandra from 1976, which is valid for $\theta>7/12$. Motohashi and Ramachandra's result corresponded to Huxley's $7/12$ exponent for the prime number theorem in short intervals. The main new idea leading to the improvement is using Ramar\'e's identity to extract a small prime factor from the $n$-sum. The proof method also allows us to improve on an estimate of Zhan for the exponential sum of the M\"obius function as well as some results on multiplicative functions and almost primes in short intervals. 
\end{abstract}

\maketitle

\section{Introduction}
Let $\Lambda(n)$ and $\mu(n)$ denote the von Mangoldt and M\"obius functions. In 1972 Huxley~\cite{huxley} proved that the prime number theorem holds on intervals of length $H \geq x^{7/12+\varepsilon}$, i.e.
\begin{align}\label{eq0.5}
\sum_{x < n\leq x+H}\Lambda(n)=(1+o(1))H\quad  \text{for $H\geq x^{7/12+\varepsilon}$.}
\end{align}
Soon after Huxley's work, Motohashi \cite{motohashi} and Ramachandra~\cite{rama} independently adapted the proof to the case of the M\"obius function. In fact, Ramachandra handled a larger class of sequences arising as Dirichlet series coefficients of products of Dirichlet $L$-functions, their powers, logarithms, and derivatives (a class of sequences whose most notable representatives are $\mu(n)$ and $\Lambda(n)$), showing that for such sequences we have an asymptotic formula for their sums over short intervals $[x,x+H]$ of length $H \geq x^{\theta}$ for any $\theta > 7/12 = 0.5833\dotsc$. 

The only improvement to the results of Huxley and Motohashi and Ramachadra is Heath-Brown's~\cite{hb-1988} result that one can obtain an asymptotic formula for intervals of length $H\geq x^{7/12-\varepsilon(x)}$ for any $\varepsilon(x)$ tending to $0$ at infinity.

In this paper we show that in various instances, including the M\"obius function but not the von Mangoldt function, the exponent $x^{7/12+\varepsilon}$ can be improved to $x^{0.55+\varepsilon}$. For the M\"obius function our result is
\begin{theorem}\label{theo_mobius} Let $\theta>0.55$ and $\varepsilon>0$ be fixed. Then, for $x$ large enough and $H\geq x^{\theta}$, we have
\begin{align}\label{eq0}
\sum_{x < n\leq x+H}\mu(n)=O\left(\frac{H}{(\log x)^{1/3-\varepsilon}}\right).
\end{align}
\end{theorem}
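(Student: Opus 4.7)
My plan is to adapt the Motohashi--Ramachandra framework, which reduces the short-interval estimate to a bound on a Dirichlet polynomial integral, and to deploy Ramar\'e's identity as the new tool for handling the balanced Type II sums that are the bottleneck at the $7/12$ exponent.

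First, via a truncated Perron formula and a small contour shift, I would reduce the problem to showing
\begin{align*}
\int_{T_{0}}^{T} |F(1+it)|\, \min\!\left(\frac{H}{x},\frac{1}{|t|}\right) dt \ll (\log x)^{-1/3-\varepsilon},
\end{align*}
where $F(s) = \sum_{n\asymp x} \mu(n) n^{-s}$, $T_{0}\asymp (\log x)^{A}$, and $T\asymp (x/H)(\log x)^{A}$. I would then apply Heath-Brown's identity at a fixed level $K$ (say $K=5$) to $\mu(n)$ and split each resulting variable dyadically, so that $F(1+it)$ is expressed as a sum of $(\log x)^{O(1)}$ products of at most $2K$ Dirichlet polynomials $M_{j}(1+it)$ with prescribed lengths $N_{1},\dots,N_{2K}$ of product $\asymp x$; each factor is either prime-supported (``$\mu$-type''), logarithmic, or $\mathbf{1}$-supported with smooth summation available.

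Second, I would dispatch the non-bottleneck pieces by the standard large-value machinery: the mean-value theorem for Dirichlet polynomials, the fourth-moment bound for $\zeta(1+it)$, and the Halasz--Montgomery inequality combined with Huxley's large-values theorem. These handle every configuration \emph{except} the critical Type II regime in which the total length splits into two blocks of size close to $x^{1/2}$. For these I would invoke Ramar\'e's identity: for integers $n\in(N,2N]$ possessing at least one prime factor in a window $(P,2P]$ with $P=x^{\delta}$ for a small parameter $\delta>0$, one has
\begin{align*}
a_{n} \;=\; \sum_{\substack{p\mid n \\ P<p\leq 2P}} \frac{a_{n}}{\omega_{(P,2P]}(n)}.
\end{align*}
Writing $n=pm$ and swapping the order of summation factorises the associated Dirichlet polynomial as $P(s)\,A'(s)$ with $P(s)=\sum_{P<p\leq 2P}p^{-s}$ short and $A'(s)$ of length $N/P$. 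The erstwhile two-factor Type II product becomes a three-factor product whose length distribution $(P,\,N/P,\,x/N)$ can be rebalanced by tuning $\delta$, unlocking large-value and fourth-moment estimates that are simply unavailable to the stuck $(x^{1/2},x^{1/2})$ split powering the $7/12$ barrier.

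Finally, one optimises $\delta$ against the Halasz--Montgomery large-values estimate (exploiting that $P(s)$ has a strong moment bound since its coefficients are prime indicators) to verify that the measure of ``large'' $t$ on which $|F(1+it)|>V$ is small enough to give the saving $(\log x)^{-1/3-\varepsilon}$ for $H\geq x^{0.55+\varepsilon}$. The main obstacle I anticipate is the bookkeeping in the case when several Heath-Brown variables are themselves forced to be larger than $x^{\delta}$: then the existence of a prime factor of $n$ in the window $(P,2P]$ is not automatic, and one must first peel off the contribution of $P$-rough integers, controlling them by sieve methods (the sparsity $\prod_{p\leq P}(1-1/p)\asymp 1/\log P$ should suffice for a small log-power loss compatible with the target), and then carefully track how the extracted prime factor interacts with the remaining factorisations coming from Heath-Brown's identity.
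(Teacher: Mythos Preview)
Your proposal has the right headline idea but misses the ingredient that actually produces $0.55$, and sets up Ramar\'e's identity in a way that does not work. The decisive input is not Hal\'asz--Montgomery or Huxley's large-value theorem---those are precisely what give the $7/12$ barrier---but the trilinear Type~I/II estimate of Heath-Brown and Iwaniec, which bounds $\int |M(s)N(s)Z(s)|\,dt$ when $Z(s)$ is a zeta-type partial sum and $M,N$ each have length $\ll x^{0.46}$. The genuine bottleneck is not the $x^{1/2}\times x^{1/2}$ split you describe (that case \emph{is} dispatched by the Ramar\'e prime together with Cauchy--Schwarz and the mean-value theorem), but rather configurations such as five factors each of size $x^{1/5}$, for which no subset of the lengths lands in $[x^{0.45},x^{0.55}]$. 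One must then single out one factor and group the remaining four as $x^{0.4}\times x^{0.4}$, and it is the Heath-Brown--Iwaniec lemma (which uses $\theta>0.55$ in an essential, non-continuous way) that closes this case. Your toolbox never invokes it, so the argument as written cannot reach $0.55$. Relatedly, take $k=20$ rather than $K=5$ in Heath-Brown's identity, so that any factor of length $\gg x^{1/20}$ is automatically a smooth zeta sum---this is exactly what the trilinear lemma requires of its long variable.

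Your Ramar\'e step is also miscalibrated. With a dyadic window $(P,2P]$ and $P=x^{\delta}$, the integers possessing a prime factor in that window have density only $\asymp 1/(\delta\log x)$; the complement is \emph{not} the set of $P$-rough integers and is not sparse, so you cannot peel it off. The paper instead applies Ramar\'e \emph{before} Heath-Brown's identity, to the full variable $n\sim x$, over a wide sub-polynomial window $(P,Q]$ with $P=\exp((\log x)^{2/3+\varepsilon/2})$ and $Q=x^{1/(\log\log x)^2}$. Then the integers with no prime factor in $(P,Q]$ have density $O(\log P/\log Q)$ (this is the source of the exponent $1/3-\varepsilon$ in the theorem), and the extracted prime polynomial obeys a uniform Vinogradov--Korobov pointwise bound $|P(1+it)|\ll_A(\log x)^{-A}$. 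Because $P$ and $Q$ are sub-polynomial, the extra prime does not alter the length combinatorics of the $N_i$ at all; its sole job is to supply the log-power saving in the Type~II estimate, after which one runs a simple combinatorial dichotomy (either some subset of lengths hits $[0.45,0.55]$, or one can make a Heath-Brown--Iwaniec trilinear split) uniformly over all configurations.
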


Note that even under the Riemann hypothesis, one can only get such results for $\theta > 1/2$ (see e.g. \cite[Section 10.5]{iw-kow}), so our theorem moves a long-standing record significantly closer to a natural barrier. 

The main new idea leading to the improvement is using an identity attributed to Ramar\'e (see \cite[Chapter 17.3]{opera} and formula \eqref{ramare} below) to extract a small prime factor from
the $n$–sum. We will discuss the proof ideas, limitations of different methods etc. in more detail in Section~\ref{se:Discussion}. 

Like Ramachandra's method, ours works for a wide class of multiplicative functions. In particular, we can strengthen a result proved by Ramachandra in \cite{rama} (and obtained in unpublished work of Hooley and Huxley) on sums of two squares in short intervals, which again involved the exponent $7/12$.

\begin{theorem}\label{theo_square} Let $N_0(x)$ denote the number of integers $\leq x$ that can be written as the sum of squares of two integers. Then for $\theta>0.55$  and $\varepsilon>0$ fixed, $x$ large enough and $H\geq x^{\theta}$, we have
\begin{align*}
N_0(x+H)-N_0(x)=(C+O((\log x)^{-1/6+\varepsilon}))\frac{H}{(\log x)^{1/2}},
\end{align*}
where $C=\frac{1}{\sqrt{2}}\prod_{p\equiv 3 \pmod 4}\left(1-1/p^2\right)^{-1/2}$ is the Landau--Ramanujan constant.
\end{theorem}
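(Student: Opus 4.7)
The plan is to treat the indicator function $b(n)$ of sums of two squares by a Selberg--Delange style approach and reduce the error analysis to the machinery developed for Theorem~\ref{theo_mobius}. The starting point is the standard multiplicative factorisation
\[
\sum_{n\geq 1} \frac{b(n)}{n^s} = \bigl(\zeta(s) L(s,\chi_{-4})\bigr)^{1/2} G(s),
\]
where $\chi_{-4}$ is the non-principal character modulo $4$ and $G(s) = (1-2^{-s})^{-1/2}\prod_{p\equiv 3 \pmod 4}(1-p^{-2s})^{-1/2}$ is a Euler product holomorphic, non-vanishing, and uniformly bounded in each half-plane $\Re s \geq 1/2 + \delta$ with $\delta > 0$ fixed. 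This follows from a local Euler-factor comparison using $b(p)=1$ for $p = 2$ or $p \equiv 1 \pmod 4$ and $b(p^k) = \mathbf{1}_{2 \mid k}$ for $p \equiv 3 \pmod 4$.

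I would then use a Perron-type formula to express
\[
\sum_{x < n \leq x+H} b(n) = \frac{1}{2\pi i}\int_{(c)} \bigl(\zeta(s) L(s, \chi_{-4})\bigr)^{1/2} G(s) \, \frac{(x+H)^s - x^s}{s}\, ds + (\text{small error}),
\]
and deform the contour into a Hankel loop around the branch point at $s = 1$ together with a vertical segment on $\Re s = 1/2 + \varepsilon$. A direct Hankel computation using $L(1,\chi_{-4}) = \pi/4$, $G(1) = \sqrt{2}\prod_{p \equiv 3 \pmod 4}(1-p^{-2})^{-1/2}$, and $\Gamma(1/2) = \sqrt{\pi}$ produces the main term $CH/(\log x)^{1/2}$ with precisely the Landau--Ramanujan constant $C$ as stated.

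The vertical segment governs the error, and this is where the machinery of Theorem~\ref{theo_mobius} enters: after a dyadic decomposition, the contribution from the contour reduces (via inverse Mellin and unfolding $(\zeta L)^{1/2}G$ into a Dirichlet convolution) to the same kind of bilinear (type II) and linear (type I) short-interval sums studied for $\mu$. The Ramar\'e identity extracting a small prime factor applies verbatim, and the necessary zero-density and fourth-moment inputs for $L(s,\chi_{-4})$ are classical and of the same quality as for $\zeta(s)$. The main obstacle is tracking how the unconditional $(\log x)^{1/3-\varepsilon}$ saving from Theorem~\ref{theo_mobius} propagates through the fractional (square-root) structure near $s=1$: because the Dirichlet series of $b$ is essentially the square root of the combination appearing in the $\mu$-problem, only half of that saving survives, giving the announced relative error $O((\log x)^{-1/6+\varepsilon})$ against the main term of size $H/(\log x)^{1/2}$.
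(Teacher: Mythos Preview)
Your proposal conflates two methodologically distinct approaches and, as written, has a structural gap. The Selberg--Delange/Hankel route you describe is essentially the classical complex-analytic method of Ramachandra and Hooley--Huxley, which yields only the exponent $7/12$: one cannot move the vertical segment to $\Re s = 1/2 + \varepsilon$ because $(\zeta(s)L(s,\chi_{-4}))^{1/2}$ acquires branch singularities at every zero of $\zeta$ or $L$, and staying inside the zero-free region ties the error to Huxley's zero-density exponent. Your attempt to rescue the vertical integral by ``unfolding $(\zeta L)^{1/2}G$ into a Dirichlet convolution'' and then invoking the type~I/II machinery is circular --- the unfolded coefficients are precisely $b(n)$, so you are back at the original short-interval sum and the contour detour has accomplished nothing. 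The order of operations matters: Ramar\'e's identity must be applied \emph{on the arithmetic side before} any Perron step, not grafted onto a complex-analytic error term afterwards.

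The paper instead deduces Theorem~\ref{theo_square} directly from Proposition~\ref{prop_periodic}, applied with $f=b$ (which is multiplicative and periodic modulo~$4$ on primes). One first extracts a small prime factor from $b(n)$ via Ramar\'e, then applies a Heath-Brown-type decomposition and the same Dirichlet-polynomial lemmas as for $\mu$ to establish the comparison
\[
\sum_{x<n\leq x+H} b(n) = \frac{H}{x}\sum_{x<n\leq 2x} b(n) + O\!\left(\frac{H}{\log x}\prod_{\substack{p\in[1,x]\setminus[P,Q]}}\Bigl(1+\frac{b(p)}{p}\Bigr)\right).
\]
The main term then comes from the classical Landau--Ramanujan asymptotic for the long sum, with no Hankel computation needed. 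The exponent $-1/6+\varepsilon$ does \emph{not} arise from ``half the saving propagating through a square root'' in the complex analysis; it comes from evaluating this error product. Since $b(p)=1$ only for $p=2$ or $p\equiv 1\pmod 4$, Mertens in arithmetic progressions gives $\prod_{p\leq y}(1+b(p)/p)\asymp(\log y)^{1/2}$, so with $P=\exp((\log x)^{2/3+\varepsilon/2})$ and $Q=x^{1/(\log\log x)^2}$ the error is $H(\log x)^{-2/3+\varepsilon}$, i.e.\ a relative error of $(\log x)^{-1/6+\varepsilon}$ against the main term $CH(\log x)^{-1/2}$.
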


We can also apply our method to the $k$-fold divisor functions $\tau_k$. For $k\leq 5$, short sums of $\tau_k(n)$ over an interval $[x,x+x^{\theta_k+\varepsilon}]$ are well-understood by directly applying the fact that one obtains a large power saving in the corresponding long sums (see \cite{huxley-divisor}, \cite{kolesnik}, \cite[Theorem 13.2]{ivic} for the exponents $\theta_2=131/416$, $\theta_3=43/96$ and $\theta_k=(3k-4)/(4k)$, $4\leq k\leq 8$, respectively). 

However, for large $k$, understanding $\tau_k$ in short intervals is closely connected to the problem of understanding the von Mangoldt function $\Lambda$ in short intervals, which one can currently asymptotically estimate only on intervals around $x$ of length $\geq x^{7/12+o(1)}$. Therefore, the best value of $\theta_k$ for $k \geq 6$ in the literature is $\theta_k=7/12+\varepsilon$, coming from Ramachandra's theorem \cite{rama}. Our next theorem says that we can in fact do better for the divisor functions than for the primes.

\begin{theorem}\label{theo_divisor} Let $\tau_k(n)$ denote the $k$-fold divisor function. Then for $\theta>0.55$  and $\varepsilon>0$ fixed, $x$ large enough and $H\geq x^{\theta}$, we have
\begin{align*}
\sum_{x< n\leq x+H}\tau_k(n)=P_{k-1}(\log x)H+O(H(\log x)^{(2/3+\varepsilon)k-1}),
\end{align*}
where $P_{k-1}$ is a polynomial of degree $k-1$ that can be be calculated explicitly (see \cite[Formula (5.36) in Section II.5.4]{TenenbaumBook}).
\end{theorem}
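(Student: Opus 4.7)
The plan is to follow the same framework used in the proof of Theorem~\ref{theo_mobius}, combined with a Mellin/Perron-type representation that supplies the main term. First, one starts from the identity
\[
\sum_{x<n\le x+H}\tau_k(n)=\frac{1}{2\pi i}\int_{c-iT}^{c+iT}\zeta(s)^k\,\frac{(x+H)^s-x^s}{s}\,ds + (\text{negligible}),
\]
with $c=1+1/\log x$ and $T$ a suitable power of $x$. Shifting the contour to the left of the line $\Re s=1$ picks up the residue at $s=1$, where $\zeta(s)^k$ has a pole of order $k$; a standard computation of this residue produces the claimed main term $P_{k-1}(\log x)H$, with $P_{k-1}$ as in the statement.

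The error term then reduces to bounding an integral of the form
\[
\int_{|t|\le T}\lvert\zeta(\sigma_0+it)\rvert^{k}\,\min\!\bigl(H,\tfrac{x}{|t|+1}\bigr)\,x^{\sigma_0-1}\,dt
\]
for a suitable $\sigma_0\in(1/2,1)$. I would discretize the $t$-range and approximate $\zeta(s)^k$ by a Dirichlet polynomial (via the approximate functional equation, or by directly working with $\sum_{n\le X}\tau_k(n)n^{-s}$). The method of Theorem~\ref{theo_mobius}, which uses Ramar\'e's identity to extract a small prime $p\le P$ from the $n$-sum, transfers naturally here because $\tau_k$ is multiplicative and satisfies the bounded multiplicative relation $\tau_k(p^a m)=\binom{a+k-1}{k-1}\tau_k(m)$ for $(p,m)=1$. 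This yields the same improved Dirichlet polynomial mean-value bounds that drive the $0.55$ exponent in Theorem~\ref{theo_mobius}.

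The main obstacle I anticipate is the careful bookkeeping of logarithmic factors needed to reach the precise exponent $(2/3+\varepsilon)k-1$ in the error term. Shiu-type upper bounds such as $\sum_{n\le N}\tau_k(n)^2\ll N(\log N)^{k^2-1}$, together with their analogues over short arithmetic progressions, must be applied at each step; and the unboundedness of $\tau_k$ (in contrast to $\mu$, which is $\{-1,0,1\}$-valued) forces an extra Cauchy--Schwarz in several places. Note that the saving $(\log x)^{k/3-\varepsilon k}$ over the trivial bound $H(\log x)^{k-1}$ is exactly $k$ times the corresponding saving $(\log x)^{1/3-\varepsilon}$ in Theorem~\ref{theo_mobius}, which is consistent with the heuristic that $\zeta^k$ is, loosely speaking, the $k$-th power of the generating object underlying the proof of Theorem~\ref{theo_mobius}. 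Once these technical issues are handled, the claimed short-interval asymptotic follows from the same large value and mean value estimates for Dirichlet polynomials that drive the proof of Theorem~\ref{theo_mobius}.
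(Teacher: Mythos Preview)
Your proposal has a structural gap. You plan to extract the main term by a contour shift in Perron's formula (picking up the residue of $\zeta(s)^k$ at $s=1$) and then bound the remaining vertical integral using Ramar\'e's identity. But these two steps do not fit together in that order. Bounding $\int |\zeta(\sigma_0+it)|^k \min(H, x/|t|)\,dt$ along a line $\sigma_0 < 1$ is essentially a zero-density problem, and current zero-density estimates only give the Huxley exponent $7/12$. Ramar\'e's identity is not an analytic statement about $\zeta(s)^k$; it is an arithmetic decomposition of the sum $\sum_{x<n\le x+H} f(n)$, it must be applied \emph{before} any Perron step, and it comes with a built-in sieve error $O(H\log P/\log Q)$ from the integers with no prime factor in $(P,Q]$. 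There is no mechanism for injecting this decomposition into a contour integral after the main term has already been separated off.

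The paper instead deduces the theorem from Proposition~\ref{prop_periodic}. One applies Ramar\'e's identity first (Remark~\ref{rem_f}), obtaining
\[
\sum_{x<n\le x+H}\tau_k(n)=\sum_{\substack{x<prn\le x+H\\ P<p\le Q,\ r\le x^{\varepsilon/2}}}\tau_k(p)\,a_r\,\tau_k(n)+O\Biggl(\frac{H}{\log x}\prod_{p\in[1,x]\setminus[P,Q]}\Bigl(1+\frac{k}{p}\Bigr)\Biggr),
\]
with $P=\exp((\log x)^{2/3+\varepsilon/2})$ and $Q=x^{1/(\log\log x)^2}$. The product on the right is $\asymp(\log x)^{(2/3+\varepsilon/2)k}$, so this sieve error is already the claimed $O(H(\log x)^{(2/3+\varepsilon)k-1})$; it is not produced by Cauchy--Schwarz bookkeeping as you suggest. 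One then applies a Heath-Brown-type decomposition to the inner variable and proves, via exactly the Dirichlet-polynomial mean values of Section~\ref{sec:MobiusProof}, a \emph{comparison principle} between the short sum and a long sum over $(x,x+y_1]$. The main term $P_{k-1}(\log x)H$ comes from the classical long-interval asymptotic for $\tau_k$, not from a residue. Note also that the extracted prime lies in $(P,Q]$, not $p\le P$ as you wrote: the point is that $p$ is large enough for the corresponding prime Dirichlet polynomial to enjoy a strong pointwise bound from the Vinogradov--Korobov zero-free region (this is where the saving in Lemma~\ref{le_bilinear} comes from), yet small enough that the sieve error is tolerable.
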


The proof of Theorem \ref{theo_divisor} works for non-integer values of $k$ as well (including complex values), and although in those cases the function $P_{k-1}(\log x)$ will not be a polynomial anymore, it can still be expressed as a linear combination of the functions $(\log x)^{k-1-j}$ for $j\geq 0$. 

The proof of Theorem \ref{theo_mobius} is inapplicable for the corresponding problem for the von Mangoldt function, since one cannot extract small prime factors from numbers $n$ in the support of $\Lambda(n)$. Nevertheless, the proof does work for $E_2$ almost primes, that is to say numbers of the form $p_1p_2$ with $p_1,p_2$ primes. We are able to obtain an asymptotic for the count of $E_2$ numbers on intervals of length $x^{0.55+\varepsilon}$.

\begin{theorem}\label{theo_E2} Let $\theta>0.55$ be fixed. Then for $x$ large enough and $H\geq x^{\theta}$ we have
\begin{align*}
\sum_{\substack{x < n\leq x+H\\n\in E_2}}1= H\frac{\log \log x}{\log x} + O\left(H \frac{\log \log \log x}{\log x}\right).
\end{align*}
\end{theorem}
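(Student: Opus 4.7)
The key observation, as highlighted in the introduction, is that every $E_2$-number $n = p_1 p_2$ automatically carries a small prime factor (the smaller of $p_1, p_2$), which plays exactly the role that Ramar\'e's identity extracts in the proof of Theorem~\ref{theo_mobius}; this is the structural feature absent for $\Lambda(n)$. The starting point is the combinatorial identity
\[
\sum_{\substack{x < n \le x+H \\ n \in E_2}} 1 \;=\; \sum_{p \le \sqrt{x+H}} \bigl(\pi((x+H)/p) - \pi(x/p)\bigr) + O(\sqrt{x}),
\]
parametrising each $E_2$-number uniquely by its smaller prime factor, with the error absorbing the $O(\sqrt{x}/\log x)$ prime squares in $(x,x+H]$ and the negligible double-counting from pairs $p_1 < p_2 \le \sqrt{x+H}$ (whose count is only $O(x^{2\theta-1})$). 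Using Mertens' theorem, the expected main term is
\[
\sum_{p \le \sqrt{x+H}} \frac{H/p}{\log(x/p)} \;=\; \frac{H\log\log x}{\log x} + O\!\left(\frac{H}{\log x}\right).
\]

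The heart of the proof is then the error
\[
E(x) \;:=\; \sum_{p \le \sqrt{x+H}} \Bigl(\pi((x+H)/p) - \pi(x/p) - \frac{H/p}{\log(x/p)}\Bigr),
\]
which must be bounded by $O(H\log\log\log x/\log x)$. I would decompose the $p$-sum dyadically into ranges $p \in (P, 2P]$. For very small $P$ the classical prime number theorem with the standard zero-free-region error term is enough. For intermediate $P$ (roughly $P \le x^{0.46}$), the inner intervals $(x/p,(x+H)/p]$ are long enough relative to their centres that Huxley's $L^2$-mean-value bound for primes in short intervals applies; Cauchy--Schwarz against $\#\{p \in (P,2P]\}$ then yields the required saving on average. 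For $P$ close to the critical scale $\sqrt{x}$, however, no individual short-interval prime count is accessible, and one must interpret the partial sum as a bilinear sum
\[
\sum_{\substack{pq \in (x,x+H]\\ p \in (P,2P]}} 1,
\]
decompose the prime indicator on $q$ via a Heath-Brown identity, and apply precisely the Dirichlet-polynomial, large-value and zero-density machinery developed for the proof of Theorem~\ref{theo_mobius}. The crucial point is that the outer prime $p$ is already playing the role of the ``small prime factor'' that Ramar\'e's identity has to produce artificially in the $\mu$-proof, so the estimates transfer without further work.

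The main obstacle is exactly this critical range $P \approx \sqrt{x}$: it is here that the exponent $0.55$ is forced, in parallel with the $\mu$-case. The $\log\log\log x/\log x$ factor in the claimed error arises from combining the (power-of-$\log$) saving from each of the $\log\log x$ dyadic ranges of $P$, the small-$P$ contribution being dominated by the zero-free-region estimate and hence negligible.
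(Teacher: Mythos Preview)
Your central claim --- that for $P$ near $\sqrt{x}$ the outer prime $p$ ``is already playing the role of the small prime factor'' so that the Dirichlet-polynomial estimates from the M\"obius proof ``transfer without further work'' --- is where the argument breaks. In Lemma~\ref{le_bilinear} the pointwise saving on $P(s)$ is combined with Cauchy--Schwarz and the mean value theorem, and the resulting bound contains a term of size $P_1R\cdot (x/H)^2\approx P_1 x^{0.9-2\varepsilon}$ that must be $\ll x$; this forces $P_1\ll x^{0.1}$, which fails badly for $P_1\asymp x^{1/2}$. Likewise in Lemma~\ref{le_hbi} one needs the Heath--Brown factors $N_i$ to have total product $\asymp x$, whereas decomposing the companion prime $q\asymp x^{1/2}$ only gives $\prod N_i\asymp x^{1/2}$, so the combinatorial lemma of Section~\ref{ssec:combine} does not apply. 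In fact the paper never attempts this range at all: by Brun--Titchmarsh and Mertens the entire contribution of $E_2$-numbers with smaller prime factor in $(x^{\varepsilon},\sqrt{x}]$ is $O_{\varepsilon}(H/\log x)$ and is simply discarded, and the range $p_1<\exp((\log\log x)^2)$ is discarded as well (this is what produces the $\log\log\log x/\log x$ error, not the accumulation of dyadic ranges). Heath--Brown's identity is then applied to the \emph{long} prime $p_2\asymp x^{1-o(1)}$, and the exponent $0.55$ is forced by the same ``five factors of size $x^{1/5+o(1)}$'' configuration as in the M\"obius case, not by anything connected to $P\approx\sqrt{x}$.

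There is a further subtlety your sketch misses. With $p_1$ allowed down to $\exp((\log\log x)^2)$, the Vinogradov--Korobov bound no longer yields $|P(1+it)|\ll_A(\log x)^{-A}$, so one cannot run Lemma~\ref{le_bilinear} verbatim. The paper instead splits the $t$-range according to whether $|P(1/2+it)|$ is large; on the large set a moment estimate shows its measure is $x^{o(1)}$, and a Weyl bound for one of the long zeta-type polynomials $N_{j_0}(s)$ supplies a power saving there. (Your ``very small $P$: classical PNT'' step is also not right as stated: for every $p$ the inner interval has relative length $H/x=x^{\theta-1}$, far too short for any form of the prime number theorem to give the individual asymptotic.)
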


Our method can also be used for twisted sums. We demonstrate this by proving the following theorem.
\begin{theorem}\label{theo_Zhan} Let $\theta>3/5$ and $\varepsilon>0$ be fixed. Then for $x$ large enough and $H\geq x^{\theta}$ we have, uniformly for $\alpha \in \mathbb{R}$,
\begin{align*}
\sum_{x < n\leq x+H}\mu(n) e(\alpha n)=O\left(\frac{H}{(\log x)^{1/3-\varepsilon}}\right).
\end{align*}
\end{theorem}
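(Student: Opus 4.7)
The plan is to adapt the proof of Theorem~\ref{theo_mobius} by combining it with a Hardy--Littlewood major/minor arc decomposition in $\alpha$. By Dirichlet's approximation theorem, fix coprime integers $a,q$ with $q\leq Q$ (where $Q$ is chosen near $H(\log x)^{-A}$) satisfying $|\alpha-a/q|\leq 1/(qQ)$, and split according to whether $q\leq(\log x)^{A}$ (\emph{major arcs}) or $q>(\log x)^{A}$ (\emph{minor arcs}).

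On the major arcs, write $\alpha=a/q+\beta$ with $|\beta H|$ small and decompose into residue classes modulo $q$:
\begin{align*}
\sum_{x<n\leq x+H}\mu(n)e(\alpha n)=\sum_{r=1}^{q}e(ar/q)\sum_{\substack{x<n\leq x+H\\n\equiv r\,(\mathrm{mod}\,q)}}\mu(n)e(\beta n).
\end{align*}
Partial summation removes the factor $e(\beta n)$, reducing matters to M\"obius sums in arithmetic progressions of modulus $q\leq(\log x)^{A}$ over subintervals of $(x,x+H]$. These are bounded by a direct adaptation of the proof of Theorem~\ref{theo_mobius}: Ramar\'e's identity is compatible with AP restrictions, and the zero-density and mean-value inputs for $\zeta(s)$ used in that proof have standard analogues for $L(s,\chi)$ with $\chi$ a Dirichlet character modulo $q\leq(\log x)^{A}$, with losses that are absorbed in the $(\log x)^{-1/3+\varepsilon}$ saving.

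On the minor arcs, apply Ramar\'e's identity to extract a prime factor $p\in(P_1,P_2]$ dividing $n$, reducing the problem to bilinear sums of the shape
\begin{align*}
\sum_{P_1<p\leq P_2}\sum_{\substack{x/p<m\leq(x+H)/p}}a_m\,e(\alpha p m),
\end{align*}
with $|a_m|\leq 1$ a M\"obius/Ramar\'e-type weight. After Cauchy--Schwarz in $p$ and squaring out the inner sum, the task reduces to estimating
\begin{align*}
\sum_{m_1,m_2}a_{m_1}\overline{a_{m_2}}\sum_{P_1<p\leq P_2}e\!\left(\alpha p(m_1-m_2)\right).
\end{align*}
The inner sum is a linear Weyl sum over primes in the variable $\alpha(m_1-m_2)$; the minor-arc hypothesis $q>(\log x)^{A}$ ensures enough cancellation away from the diagonal $m_1=m_2$, while the diagonal is controlled by the length of the $m$-sum, which is shorter than $H$ thanks to the Ramar\'e factor $p\in(P_1,P_2]$.

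The main obstacle will be to preserve full uniformity in $\alpha$ through the bilinear estimate. Unlike in Theorem~\ref{theo_mobius}, Dirichlet polynomial mean-value theorems can be applied only to the $m$-variable; the $p$-sum carrying the twist must be controlled pointwise in $\alpha$ by Vinogradov/Vaughan-type exponential sum bounds. Balancing the loss from this pointwise control against the range $(P_1,P_2]$ of extracted primes and the length $H=x^{\theta}$ is what forces the exponent $3/5$ rather than the $0.55$ obtained in Theorem~\ref{theo_mobius}.
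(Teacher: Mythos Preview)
Your proposal diverges from the paper's argument in both the major and minor arc treatments, and there is a genuine gap in each.

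\textbf{Minor arcs.} The paper does not use Ramar\'e's identity here at all: Zhan's minor arc estimate \cite[Theorem~2]{Zhan92} already yields the bound for $q>(\log x)^{B}$ when $\theta>3/5$, so the minor arcs require no new input. Your bilinear approach cannot work as described, because the Ramar\'e prime lies in $(P_1,P_2]$ with $P_2=x^{o(1)}$. Vinogradov/Vaughan-type bounds for $\sum_{P_1<p\leq P_2}e(\gamma p)$ save at most a small power of $P_2$, hence only $x^{o(1)}$; moreover, for $q$ near the top of the minor arc range $q\leq H(\log x)^{-A}\asymp x^{3/5}$ these bounds are vacuous, since $q$ far exceeds the length of the prime sum. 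The balancing you describe therefore has no chance of closing.

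\textbf{Major arcs.} With your choice $Q\sim H(\log x)^{-A}$ and $q\leq(\log x)^{A}$ one has $|\beta|\leq 1/(qQ)$, hence $|\beta H|\leq(\log x)^{A}/q$, which is \emph{not} small for small $q$. Partial summation then costs a factor $1+|\beta|H\asymp(\log x)^{A}$, while the adapted Theorem~\ref{theo_mobius} (with Ramar\'e applied after the AP restriction, as you write it) only saves $(\log x)^{1/3-\varepsilon}$ because of the sieve error $H\log P/\log Q$. The reduction therefore fails as stated. The paper instead applies Ramar\'e and Heath-Brown's identity \emph{first} to the twisted sum, then uses Zhan's own major arc machinery (characters, Perron, first derivative test) to reduce to mean values of character-twisted Dirichlet polynomials over $t$-intervals of length $x/H$ inside $[T_0,T_1]$ with $T_1\asymp|\lambda|x+x/H$. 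Zhan's type~I and type~$\textrm{I}_2$ estimates carry over to $q\leq(\log x)^{B}$, and the new ingredient is in the type~II range: the pointwise bound $|P(1/2+it,\chi)R(1/2+it,\chi)|\ll_A (PR)^{1/2}(\log x)^{-A}$ from the Ramar\'e prime replaces the saving $q^{-1/2}$ that Zhan extracted from the minor arc hypothesis.

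\textbf{Source of the exponent.} You attribute the $3/5$ threshold to the minor arc balancing. In fact it arises on the major arcs, from the Dirichlet polynomial mean value estimates: the case of five factors of size $x^{1/5+o(1)}$ in Heath-Brown's decomposition is the obstruction (see Section~\ref{se:Discussion}).
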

Previously $\mu(n) e(\alpha n)$ was known to exhibit cancellations in intervals of length $H = x^\theta$ with $\theta > 5/8$, due to work of Zhan~\cite[Theorem 5]{Zhan92} from 1991. 

\section{Discussion of results, methods, and their limitations}
\label{se:Discussion}
\subsection{The case of the M\"obius function}
\label{ssec:MetMobius}
The $7/12$ exponent in Huxley's as well as in Motohashi's and Ramachandra's works is a very natural barrier: A crucial piece of information needed in Huxley's, Motohashi's and Ramachandra's proofs is a bound of the form $N(\sigma,T)\ll T^{B(1-\sigma)}$ (where $N(\sigma, T)$ is the number of zeros of the Riemann zeta function in the rectangle $\Re(s)\geq \sigma$, $|\Im(s)|\leq T$) for $T\geq 2$, $\sigma\in [1/2,1]$, with $B$ as small as possible. The best value of $B$ to date is Huxley's $B=\frac{12}{5}+o(1)$, which is the reason for the appearance of the $7/12$ exponent.

Huxley's prime number theorem \eqref{eq0.5} was proved differently by Heath-Brown in \cite{hb-identity}, but this proof also runs into serious difficulties when one tries to lower $\theta$ below $7/12$. Heath-Brown does not use zero density results but rather uses a combinatorial decomposition (Heath-Brown's identity) and mean and large value estimates for Dirichlet polynomials, but since zero density estimates are based on these, the difficulty one runs into is actually essentially the same.

Our proof of Theorem \ref{theo_mobius} manages to avoid the lack of improvements to Huxley's zero-density estimate by means of Ramar\'e's identity, which allows a more flexible combinatorial factorization of the M\"obius function than what arises from applying Heath-Brown's identity from \cite{hb-identity} alone: We will first apply Ramar\'e's identity to extract a small prime factor and then Heath-Brown's identity to the remaining long variable.

Starting with \cite{mr-annals}, Ramar\'e's identity has been successfully applied to many problems involving multiplicative functions. In particular, connected to our problem it was shown in \cite{mr-annals} that $\mu(n)$ has a sign change on every interval of the form $(x,x+Cx^{1/2}]$ with $x\geq 1$ and $C>0$ a large enough constant. Problems of the type
\begin{align}\label{eq9}
\sum_{x < n\leq x+H}1_{\mu(n)=-1}>0\quad \textrm{or}\quad \sum_{x < n\leq x+H}\Lambda(n)>0
\end{align}
are of course easier than their asymptotic counterparts \eqref{eq0} and \eqref{eq0.5}, and there are indeed various results establishing a positive lower bound for the count of primes in intervals of length shorter than $x^{7/12}$; see \cite{iw-jut}, \cite{H-BI79}, \cite{bh1} and \cite{bhp2}, among others. The record to date is due to Baker, Harman and Pintz \cite{bhp2} with the exponent $0.525$, and an earlier result of Heath-Brown and Iwaniec \cite{H-BI79} established the exponent $0.55+\varepsilon$ that we obtain here for the \emph{asymptotic} problem \eqref{eq0} rather than for the lower bound problem \eqref{eq9}. It is no coincidence that we obtain the same exponent, as our work draws a crucial lemma from theirs to handle type I/II information (see Lemma~\ref{le_hbi} below), and the proof of that lemma is not continuous in $\theta$ but crucially uses that $\theta > 0.55$.

The ultimate reason why the exponent $0.55$ is in fact the limit of our method is that when one applies Heath-Brown's identity to the M\"obius function, one needs to bound mean values of various products of Dirichlet polynomials (which are either partial sums of the Riemann zeta function or very short polynomials), and the particular case where we have five polynomials of length $x^{1/5+o(1)}$ is a case where it seems that the large values sets of the polynomials ''corresponding to the $3/4$-line'' can no longer be shown to be small enough for $\theta<0.55$ if one uses existing mean value theorems for the Riemann zeta function (such as the fourth moment or twisted moment results).

In the case of Huxley's $7/12$-result, the worst case is having six Dirichlet polynomials of length $x^{1/6+o(1)}$ but, thanks to Ramar\'e's identity, in the case of $\mu$ we can extract an additional small prime factor so that this particular configuration of polynomials can simply be dealt with a pointwise estimate, Cauchy--Schwarz and the mean value theorem for Dirichlet polynomials (see Lemma~\ref{le_bilinear} below for this argument). For primes, such a trick of extracting a small prime factor is not available.

The proof of Theorem~\ref{theo_Zhan} concerning the twisted M\"obius function follows similarly using Ramar\'e's identity to introduce a small prime variable before running Zhan's argument (which involves again Heath-Brown's identity and mean values of Dirichlet polynomials), and we will outline the proof in Section~\ref{ssec:Zhan}. The reason that one cannot go beyond $3/5$ in Theorem~\ref{theo_Zhan} is again the case where Heath-Brown's identity leads to five factors of size $x^{1/5+o(1)}$.

As we need to restrict to numbers with a small prime factor, we have to content ourselves with a rather small saving in~\eqref{eq0} (although the $1/3-\varepsilon$ exponent can be improved; see Remark \ref{rem_improve} below). In contrast, the previous methods give, for some $c >0$ and any $H \geq x^\theta$ with $\theta > 7/12$, the bound
\[
\sum_{x < n\leq x+H}\mu(n) = O \left(H \exp\left(-c \left(\frac{\log x}{\log \log x}\right)^{1/3}\right)\right)
\]
(see e.g.~\cite[Remark 4]{rama}), and a similar bound holds e.g. for the error term in Huxley's prime number theorem.

\subsection{The case of multiplicative functions and almost primes}
\label{ssec:MetOthers}
When trying to generalize Theorem~\ref{theo_mobius} to more general multiplicative functions, one needs to be careful: Unlike in the case of almost all short intervals handled in~\cite{mr-annals}, in general the short averages of multiplicative functions do not always match long averages; for more discussion on this, see Remark~\ref{rem:CounterEx} below.

Despite such limitations, we can prove the following general result from which Theorems~\ref{theo_square} and \ref{theo_divisor} immediately follow since the class of multiplicative functions under consideration in particular includes the generalized divisor functions $\tau_z(n)$ for any complex $z$, as well as the indicator of those integers that can be represented as the norm of an element in a given abelian extension $K/\mathbb{Q}$. 

\begin{proposition}\label{prop_periodic} Let $f:\mathbb{N}\to \mathbb{C}$ be a multiplicative function which is eventually periodic on the primes in the sense that, for some integers $n_0, D\geq 1$, we have $f(p)=f(q)$ whenever $p\equiv q\pmod D$ and $p,q\geq n_0$. Suppose further that $|f(n)|\leq \tau_{\kappa}(n)$ for some integer $\kappa \geq 1$. Then, for $\varepsilon>0$ fixed and $H\geq x^{0.55+\varepsilon}$ we have
\begin{align*}
\sum_{x< n\leq x+H}f(n)=\frac{H}{x}\sum_{x< n\leq 2x}f(n)+O\left(\frac{H}{\log x}\prod_{p\in [1,x]\setminus [P,Q]}\left(1+\frac{|f(p)|}{p}\right)\right),    
\end{align*}
where $P=\exp((\log x)^{2/3+\varepsilon/2})$ and $Q=x^{1/(\log \log x)^2}$.
\end{proposition}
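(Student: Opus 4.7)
The plan is to reduce the proposition, via Ramar\'e's identity and a Shiu-type bound, to bilinear Dirichlet polynomial estimates in short intervals with a small prime variable $p \in [P, Q]$, which is exactly the setting of the main technical lemmas of the paper.

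First, I would split both the short sum $\sum_{x<n\le x+H}f(n)$ and the long sum $\frac{H}{x}\sum_{x<n\le 2x}f(n)$ according to whether $\omega_{[P,Q]}(n) = 0$ or $\omega_{[P,Q]}(n) \ge 1$. In each sum, the contribution of $n$ with $\omega_{[P,Q]}(n) = 0$ is estimated by a Shiu-type theorem, using $|f(n)|\le \tau_\kappa(n)$; since such $n$ have no prime factor in $[P, Q]$, this gives a contribution bounded by $\frac{H}{\log x}\prod_{p\in [1,x]\setminus [P,Q]}(1+|f(p)|/p)$, matching the claimed error term.

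For the $\omega_{[P,Q]}(n)\ge 1$ contribution, apply Ramar\'e's identity to obtain
\[
\sum_{\substack{x<n\le x+H \\ \omega_{[P,Q]}(n)\ge 1}} f(n) = \sum_{P\le p\le Q}\sum_{\substack{x/p<m\le (x+H)/p \\ (m,p)=1}}\frac{f(p)f(m)}{1+\omega_{[P,Q]}(m)} + O(\mathrm{neg}),
\]
where the $O(\mathrm{neg})$ term bounds the $p^2\mid n$ contribution (small because $p\ge P$ is large and $|f|\le \tau_\kappa$), and analogously for the long sum. To handle the non-multiplicative weight $1/(1+\omega_{[P,Q]}(m))$, I would linearize it via the integral representation $1/(1+\omega_{[P,Q]}(m)) = \int_0^1 t^{\omega_{[P,Q]}(m)}dt$, so that, for each fixed $t \in [0, 1]$, the inner sum reduces to one over the multiplicative function $f_t(m) := f(m)t^{\omega_{[P,Q]}(m)}$ (still $\tau_\kappa$-bounded) weighted by the small prime $p$. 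It remains to show that the resulting short sums over $f(p)f_t(m)$ match $H/x$ times their long counterparts, uniformly in $t$, with acceptable error.

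For each $t$, this comparison is carried out via Heath-Brown's identity applied to the $m$-variable, producing a combinatorial decomposition of $f_t$ into convolutions of short arithmetic functions; the eventual periodicity of $f$ on primes modulo $D$ ensures that the Dirichlet series of $f_t$ is well-approximated by a product of Dirichlet $L$-functions mod $D$, giving the requisite ``type I'' control. The remaining configurations are handled by the mean and large-value estimates for Dirichlet polynomials provided by Lemmas \ref{le_bilinear} and \ref{le_hbi}, with the extracted small prime $p$ absorbed into one of the polynomial factors. The main obstacle, as in Theorem \ref{theo_mobius}, is the worst five-factor configuration of polynomials of length $x^{1/5}$: this requires $\theta > 0.55$ and is manageable only because the Ramar\'e prime converts it into a six-factor case, allowing a pointwise estimate combined with Cauchy--Schwarz and the Dirichlet polynomial mean value theorem.
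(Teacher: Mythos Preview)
Your outline has the right skeleton (Shiu to discard $n$ with no prime factor in $[P,Q]$, Ramar\'e to extract a small prime, then Dirichlet polynomial machinery from Lemmas~\ref{le_bilinear} and~\ref{le_hbi}), and the integral linearization $\frac{1}{1+\omega_{[P,Q]}(m)}=\int_0^1 t^{\omega_{[P,Q]}(m)}\,dt$ is a legitimate alternative to the paper's device of factoring $m=m_1m_2$ and removing the coprimality condition on $m_2$ with upper/lower sieve weights (Remark~\ref{rem_f}); each method yields a divisor-bounded, short auxiliary factor and preserves multiplicativity of the long part.

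The genuine gap is in the sentence ``Heath-Brown's identity applied to the $m$-variable, producing a combinatorial decomposition of $f_t$.'' Heath-Brown's identity is an identity for $1/\zeta(s)$ (or $-\zeta'/\zeta$), not for the Dirichlet series of an arbitrary multiplicative $f_t$, so this step does not exist as written. The paper's proof does real work here: it factors $n=m\,p_1\cdots p_k\,q_1\cdots q_\ell$ with $m$ being $w$-smooth, $p_i\in(w,x^{1/60}]$, $q_i>x^{1/60}$; expands each $f(q_i)=\sum_{\chi\pmod D}c_\chi\chi(q_i)$ using the eventual periodicity (this is where periodicity is actually used, to produce partial sums of $L(s,\chi)$ as ``smooth'' polynomials); then applies Heath-Brown's identity only to the indicator of large primes $q_i$ via the von Mangoldt weight. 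Your proposal gestures at the $L$-function connection but does not say how to produce the required type~I polynomials.

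A second point you do not address, and which is the most delicate part of the paper's argument, is that the number $k$ of medium primes $p_i$ can be as large as $\asymp\log\log x$, so one cannot simply carry them as separate Dirichlet polynomial factors (there would be too many to fit into the framework of Lemmas~\ref{le_bilinear}--\ref{le_hbi}). The paper resolves this by a combinatorial grouping of the $p_i$ into at most $30$ blocks of size $\in[x^{1/30},x^{1/20}]$, at the cost of $o(\log x)$ choices of cut-points. Without this (or an equivalent device), the reduction to the three arithmetic lemmas fails.
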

The proof follows along similar lines as that of Theorem~\ref{theo_mobius}, and we will discuss the differences in Section~\ref{ssec:period}. The class of multiplicative functions is chosen so that a Heath-Brown type combinatorial decomposition is possible --- the same class was recently considered by Drappeau and Topacogullari in~\cite{DrappeauTopa} in the context of the generalized Titchmarsh divisor problem. 

In the proof of Theorem~\ref{theo_E2} on $E_2$ numbers the crucial fact that we use about $E_2$ numbers is that almost all of them have a small prime factor. There is a slight complication though: the small prime factors that almost all $E_2$ numbers have are in fact so small that we do not necessarily have the Vinogradov--Korobov bound for the corresponding Dirichlet polynomials, and this requires us to use slightly more delicate estimates for Dirichlet polynomials than in the proof of Theorem \ref{theo_mobius}. We will describe the proof of Theorem \ref{theo_E2} in Section \ref{ssec:E2}.

The novelty in Theorem \ref{theo_E2} is that there we get an asymptotic formula for the number of \emph{all} $E_2$ numbers in short intervals --- if one only considered $E_2$ numbers whose prime factors are of \emph{specific sizes} (say a set of the form $\{n\leq x:\,\, n=p_1p_2,\quad x^{\alpha} < p_1\leq x^{\alpha+\varepsilon}\}$ with $\alpha$ suitably chosen), one could prove an asymptotic formula for the number of these in shorter intervals.

\section{Extracting a small prime factor}
\label{sec:Ramare}

We begin by proving the combinatorial identity that we need and that is based on Ramar\'e's identity. This identity allows us to introduce a small prime variable to our sum, which will turn out to be crucial in what follows. One could alternatively use a Tur\'an--Kubilius type argument to introduce a small prime variable but that would lead to much weaker error terms.

\begin{lemma}\label{le_ramare} Let $\varepsilon>0$ be fixed, let $x$ be large enough, and let $(\log x)^{4}\leq P<Q\leq x^{o(1/\log \log x)}$, $x^{\varepsilon}\leq H\leq x$. Then we have
\begin{align}\label{eq2}
\sum_{x < n\leq x+H}\mu(n)=-\sum_{\substack{x< prn\leq x+H\\P< p\leq Q\\r\leq x^{\varepsilon/2}}}a_r\mu(n)+O\left(H\frac{\log P}{\log Q}\right),
\end{align}
with $a_r$ being an explicit sequence (given by \eqref{eq1}) that satisfies $|a_r|\leq \tau(r)$.
\end{lemma}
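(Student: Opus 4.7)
The plan is to split the sum $\sum_{x<n\leq x+H}\mu(n)$ according to whether $n$ has a prime factor in $(P,Q]$, bound the complementary part by sieve, apply Ramar\'e's identity to the main part, expand the Ramar\'e weight, and absorb the residual coprimality constraints into the acceptable error. For the sieve step, the fundamental lemma of the Brun (or Selberg) sieve with sifting variable $z=Q$ and sieve level $D=H^{1/2}$ applies: since $H\geq x^{\varepsilon}$ and $Q\leq x^{o(1/\log\log x)}$, the parameter $s=\log D/\log z\to\infty$, so the count of integers in $(x,x+H]$ with no prime factor in $(P,Q]$ is $H\prod_{P<p\leq Q}(1-1/p)(1+o(1))\ll H\log P/\log Q$. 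Bounding $|\mu(n)|\leq 1$ yields the $O(H\log P/\log Q)$ error term in~\eqref{eq2}.

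For the remaining $n$ with $\omega_{(P,Q]}(n)\geq 1$, I would apply Ramar\'e's identity $\sum_{p\mid n,\,P<p\leq Q}1/\omega_{(P,Q]}(n)=1$ and substitute $n=pm$ with $p\nmid m$, obtaining
\[
-\sum_{P<p\leq Q}\sum_{\substack{x<pm\leq x+H\\p\nmid m}}\frac{\mu(m)}{1+\omega_{(P,Q]}(m)}.
\]
The weight admits, for squarefree $m$, the expansion
\[
\frac{1}{1+\omega_{(P,Q]}(m)}=\sum_{\substack{d\mid m\\d\text{ squarefree, all primes in }(P,Q]}}\frac{\mu(d)}{1+\omega(d)},
\]
verified via $\frac{1}{1+k}=\int_0^1 t^k\,dt$ together with the binomial theorem. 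Substituting, writing $m=de$ with $\gcd(d,e)=1$ so that $\mu(m)\mu(d)=\mu(e)$, and noting that $p\nmid m$ together with $\gcd(d,e)=1$ amounts to $\gcd(pd,e)=1$, the sum takes the shape
\[
-\sum_{P<p\leq Q}\sum_{d}\frac{1}{1+\omega(d)}\sum_{\substack{x<pde\leq x+H\\\gcd(pd,e)=1}}\mu(e).
\]
Renaming $r=d$, $n=e$, and taking $a_r=\mathbf{1}_{r\text{ sqfree, primes in }(P,Q]}/(1+\omega(r))$ as the explicit formula~\eqref{eq1} matches the form in~\eqref{eq2} apart from the coprimality constraint $\gcd(pr,n)=1$; the bound $|a_r|\leq 1\leq\tau(r)$ is immediate.

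It then remains to (a) drop the constraint $\gcd(pr,n)=1$ at an acceptable cost, (b) include the $p\mid r$ contributions (excluded in the derivation above but present in~\eqref{eq2}) at an acceptable cost, and (c) truncate to $r\leq x^{\varepsilon/2}$. For (a), bounding trivially $|\mu(n)|\leq 1$ and counting $n$ in $(x/(pr),(x+H)/(pr)]$ divisible by some prime $q\mid pr$, the discrepancy is at most
\[
H\sum_{p}\frac{1}{p}\sum_{r}\frac{1}{r(1+\omega(r))}\sum_{q\mid pr}\frac{1}{q}\ll \frac{H\log Q}{(\log x)^{4}},
\]
using $p,q\geq P\geq(\log x)^{4}$, which is absorbed into $O(H\log P/\log Q)$ because $(\log Q)^{2}\leq(\log x)^{4}\log\log x$ under our hypotheses on $P$ and $Q$. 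The argument for (b) is analogous. For (c), Rankin's trick with $\sigma=c/\log Q$ gives
\[
\sum_{r>x^{\varepsilon/2}}\frac{1}{r}\ll x^{-c\varepsilon/(2\log Q)}(\log x)^{O(1)},
\]
and since $Q\leq x^{o(1/\log\log x)}$ the first factor beats any power of $\log x$, so the tail is absorbed.

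The main obstacle will be the verification of step (a): removing the coprimality $\gcd(pr,n)=1$ that Ramar\'e's identity naturally introduces must produce an error strictly smaller than the target $H\log P/\log Q$, which is delicate because $\log P/\log Q$ can itself be quite small. The crucial input is that $P\geq(\log x)^{4}$ is large enough that the reciprocal sum $\sum_{q\in(P,Q]}1/q^{2}$ of the "bad" primes is negligible compared with the error budget.
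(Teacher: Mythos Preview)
Your route is essentially correct but genuinely different from the paper's. The paper also splits $m=m_1m_2$ with $m_1$ the $(P,Q]$-part, but then removes the coprimality condition on $m_2$ by inserting linear sieve weights $\lambda_d^{\pm}$ (so that the paper's sequence is $a_r=(\lambda^{+}\mu*w\mu)(r)$ with $w$ as in \eqref{eq1}). You instead exploit the exact identity
\[
\frac{1}{1+\omega_{(P,Q]}(m)}=\sum_{\substack{d\mid m\\ p\mid d\Rightarrow p\in(P,Q]}}\frac{\mu(d)}{1+\omega(d)},
\]
which cleanly decouples the weight and produces the simpler sequence $a_r=\mathbf{1}_{p\mid r\Rightarrow p\in(P,Q]}\,\mu(r)^{2}/(1+\omega(r))$. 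This is elegant and avoids the fundamental lemma entirely; note however that your $a_r$ is \emph{not} the sequence defined in \eqref{eq1}, so strictly speaking you are proving a variant of the lemma (one that is equally serviceable for the rest of the paper, since only the bound $|a_r|\leq\tau(r)$ and the support condition matter downstream).

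There is one genuine gap. In both steps (a) and (c) you bound the inner $n$-count by $H/(prq)$ (resp.\ $H/(pr)$), dropping the $+1$ boundary term. In (a) this is harmless, since the discrepancy forces $q^{2}\mid prn$ for some $q>P$, and a Shiu-type bound on $\sum_{x<k\leq x+H,\,q^{2}\mid k}\tau_3(k)$ disposes of it. In (c), however, the $+1$ terms amount to counting pairs $(p,r)$ with $pr\leq x+H$ and $r>x^{\varepsilon/2}$ having all prime factors in $(P,Q]$; Rankin on $\sum_{r}1/r$ does not touch this, and for small $H$ (the lemma allows $H=x^{\varepsilon}$) the count of $Q$-smooth numbers up to $2x$ is $x^{1-o(1)}\gg H$. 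The fix is the same idea the paper uses for its $m_1$-truncation: any $k=prn\in(x,x+H]$ arising here has $\omega_{(P,Q]}(k)\geq \varepsilon\log x/(2\log Q)\to\infty$, and the weighted count of such $k$ is $\ll H(\log x)^{-A}$ by Shiu. Once you insert this argument, your proof goes through.
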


Note that the coefficients $a_r$ here will be harmless, due to the restrictions on their size and support.

\begin{proof}
By a standard application of the linear sieve (e.g. \cite[Corollary 6.2]{iw-kow}),
\[
\sum_{\substack{x < n \leq x+H \\ p \mid n \implies p \not \in (P, Q]}} 1 = O\left(\frac{H\log P}{\log Q}\right),
\]
so we may add to the sum on the left-hand side of \eqref{eq2} the condition $(n,\prod_{P< p\leq Q}p)>1$, obtaining
\begin{align}\label{eq8}
\sum_{x < n\leq x+H}\mu(n)=\sum_{x < n\leq x+H}\mu(n)1_{(n,\prod_{P< p\leq Q}p)>1}+O\left(\frac{H\log P}{\log Q}\right).
\end{align}
We then apply Ramar\'e's identity
\begin{align}\label{ramare}
\mu(n)1_{(n,\prod_{P< p\leq Q}p)>1}=\sum_{P< p\leq Q}\sum_{pm=n}\frac{\mu(p)\mu(m)}{\omega_{(P,Q]}(m)+1}+O(1_{p^2\mid n,\,\, p\in (P.Q]}),
\end{align}
where $\omega_{(P,Q]}(m)$ is the number of distinct prime divisors of $m$ on $(P,Q]$; this identity follows directly since $\mu$ is multiplicative and the number of representations $n=pm$ with $P<p\leq Q$ is $\omega_{(P,Q]}(n)$. The contribution of the $O(\cdot)$ term, after summing over $x < n\leq x+H$, is trivially bounded by
\begin{align*}
\ll \sum_{P< p\leq Q}\frac{H}{p^2}\ll \frac{H}{P},
\end{align*}
which can be included in the error term.

In order to decouple the $p$ and $m$ variables, we write $m$ uniquely as $m=m_1m_2$ with $m_1$ having all of its prime factors from $(P,Q]$ and $m_2$ having no prime factors from that interval. Then we see that
\begin{align*}
\sum_{x < n\leq x+H}\mu(n)=\sum_{P< p\leq Q}\sum_{\substack{x/p\leq m_1m_2\leq (x+H)/p\\p'\mid m_1\Longrightarrow p'\in (P,Q]\\p''\mid m_2\Longrightarrow p''\not \in (P,Q]}}\frac{\mu(p)\mu(m_1)\mu(m_2)}{\omega_{(P,Q]}(m_1)+1}+O\left(H\frac{\log P}{\log Q}\right).
\end{align*}
We wish to restrict the support of the $m_1$ variable. By writing $k=pm_1m_2$ (and noting that any $k$ has at most $\omega(k)$ such representations) and recalling that $Q<x^{\varepsilon/(10A\log \log x)}$ for $A\geq 10$ fixed and $x$ large enough, we see that the contribution of the terms with $m_1>x^{\varepsilon/4}$ is bounded by
\begin{align*}
\leq \sum_{\substack{x< k\leq x+H\\\omega(k)\geq 10A/4\cdot \log \log x}}\omega(k)\ll (\log x)2^{-10A/4\cdot \log \log x}\sum_{x< k\leq x+H}2^{\omega(k)}\ll \frac{H}{(\log x)^{A}},
\end{align*}
say, by Shiu's bound \cite[Theorem 1]{shiu}. This is certainly an admissible error term.

Next, we need to dispose of the coprimality condition on $m_2$ in the sums above. For this we use the fundamental lemma of the sieve (see e.g. \cite[Chapter 6]{iw-kow}). Let $\lambda_d^{+}$ and $\lambda_d^-$ be the linear upper and lower bound sieve coefficients with the level of distribution $y:=Q^{s}$ with $s=100\log \log x$. Write $\mathcal{P}(P,Q) = \prod_{P < p \leq Q} p$. Then, by  \cite[inequality (6.19)]{iw-kow}, we have 
\[
\begin{split}
\left|1_{(m_2, \mathcal{P}(P,Q))=1}-\sum_{d\mid (m_2, \mathcal{P}(P,Q))} \lambda_d^{+}\right| &=  \sum_{d\mid (m_2, \mathcal{P}(P,Q))} \lambda_d^{+} - 1_{(m_2,\mathcal{P}(P,Q))=1} \\
&\leq \sum_{d\mid (m_2, \mathcal{P}(P,Q))} \lambda_d^{+} - \sum_{d\mid (m_2, \mathcal{P}(P,Q))} \lambda_d^-.
\end{split}
\]
Hence
\begin{equation}
\begin{split}
\label{eq:Ramrem1dec}
\sum_{x < n\leq x+H}\mu(n)&= \sum_{\substack{x< pm_1dn\leq x+H\\P< p\leq Q, \, m_1\leq x^{\varepsilon/4}\\p'\mid dm_1\Longrightarrow p'\in (P,Q]}}\lambda_d^{+}\frac{\mu(p)\mu(m_1)\mu(dn)}{\omega_{(P,Q]}(m_1)+1} + O\left(H \frac{\log P}{\log Q}\right) \\
& \qquad + O\left(\sum_{\substack{x< pm_1dn\leq x+H \\ P< p\leq Q, m_1\leq x^{\varepsilon/4} \\ p'\mid dm_1\Longrightarrow p'\in (P,Q]}} (\lambda_d^{+} - \lambda_d^-) \right).
\end{split}
\end{equation}
In the last error term we can sum first over $n$ obtaining that it is at most of order
\begin{align*}
& \sum_{\substack{P< p\leq Q\\m_1\leq x^{\varepsilon/4}\\p'\mid m_1\Longrightarrow p'\in (P,Q]}} \frac{H}{m_1 p} \left(\sum_{d\mid \mathcal{P}(P,Q)} \frac{\lambda_d^{+}}{d} - \sum_{d\mid  \mathcal{P}(P,Q)} \frac{\lambda_d^-}{d}\right) + O(yQx^{\varepsilon/4}).
\end{align*}
Now, by the fundamental lemma of the sieve (see e.g. \cite[Theorem 6.1]{iw-kow}), the difference in the parentheses  is $O(e^{-s}) = O((\log x)^{-100})$, which leads to an admissible error term after summing over $m_1$ and $p$.

In the main term on the right-hand side of~\eqref{eq:Ramrem1dec} we have $\mu(dn)=\mu(d)\mu(n)$ unless $(d,n)>1$, and if the latter condition holds, there must exist a prime $q\in (P,Q]$ such that $q\mid d$ and $q\mid n$. Writing $k=pm_1dn$, and applying Shiu's bound, the contribution of the case $(d,n)>1$ is
\begin{align*}
\ll \sum_{P< q\leq Q}\sum_{\substack{x< k\leq x+H\\q^2\mid k}}\tau_4(k) \ll \sum_{P< q\leq Q}\frac{H}{q^2}(\log x)^3 \ll \frac{H}{P\log P}(\log x)^3 \ll \frac{H}{\log x},
\end{align*}
which is small enough. Thus we may replace $\mu(dn)$ in~\eqref{eq:Ramrem1dec} by $\mu(d)\mu(n)$. Defining
\begin{align}\label{eq1}
a_r:=(\lambda^{+}\mu*w\mu)(r),\quad \textrm{where}\,\, w(r):=\frac{1_{r \leq x^{\varepsilon/4}} 1_{p \mid r \implies p \in (P, Q]}}{\omega_{(P,Q]}(r)+1},
\end{align}
we see that the sequence $a_r$ is supported on $r\leq x^{\varepsilon/4}Q^{s}\leq x^{\varepsilon/2}$ and $|a_r|\leq \tau(r)$ (since $|\lambda_d^{+}|\leq 1$), so we obtain \eqref{eq2}.
\end{proof}

\begin{remark}\label{rem_f}
Let $f:\mathbb{N}\to \mathbb{C}$ be any multiplicative function satisfying $|f(n)|\leq \tau_{\kappa}(n)$ for some fixed $\kappa\geq 1$, and let $P$ and $Q$ be as above, with the additional constraint that $P\geq (\log x)^{A_{\kappa}}$ for $A_{\kappa}$ a large enough constant. Since Shiu's bound is applicable for $|f(n)|$, it is clear from the above proof that the analogous statement
\begin{align*}
\sum_{x < n\leq x+H}f(n)=\sum_{\substack{x< prn\leq x+H\\P< p\leq Q\\r\leq x^{\varepsilon/2}}}a_r f(p)f(n)+O\left(\frac{H}{\log x}\prod_{p\in [1,x]\setminus [P,Q]}\left(1+\frac{|f(p)|}{p}\right)\right),
\end{align*}
holds when $\mu$ is replaced by $f$ in definition of $a_r$ in~\eqref{eq1}.
\end{remark}

\section{The M\"obius function in short intervals}
\label{sec:MobiusProof}
\subsection{Applying Heath-Brown's identity}
\label{ssec:HB}
In what follows, we fix the choices
\begin{align*}
P=\exp((\log x)^{2/3+\varepsilon/2}),\quad Q=x^{1/(\log \log x)^2},\quad  H=x^{\theta},\quad \theta=0.55+\varepsilon,\quad k=20.
\end{align*}
It suffices to prove \eqref{eq0} with $H\asymp x^{0.55+\varepsilon}$, since the case $H\geq x^{0.55+\varepsilon}$ then follows by splitting the sum into short sums.

With Lemma \ref{le_ramare}, we can introducee a short prime variable into the sum of $\mu(n)$ over a short interval, and we now apply Heath-Brown's identity \cite{hb-identity}. Let $M(s)=\sum_{m\leq (2x)^{1/k}}\mu(m) m^{-s}$. Then we have the Dirichlet series identity
\begin{align*}
\frac{1}{\zeta(s)}=\sum_{1\leq j\leq k}(-1)^{j-1}\binom{k}{j}\zeta(s)^{j-1}M(s)^j+\frac{1}{\zeta(s)}(1-\zeta(s)M(s))^k,
\end{align*}
which on taking the coefficient of $n^{-s}$ on both sides for $n\leq 2x$ gives Heath-Brown's identity for the M\"obius function:
\begin{align*}
\mu(n)=\sum_{1\leq j\leq k}(-1)^{j-1}\binom{k}{j}1^{(*)(j-1)}*(\mu1_{[1,(2x)^{1/k}]})^{(*)j},
\end{align*}
where $f^{(*)j}$ is the $j$-fold Dirichlet convolution of $f$. Applying this to the $n$ variable on the right-hand side of \eqref{eq2}, we see that
\begin{align*}
\sum_{\substack{x< prn\leq x+H\\P< p\leq Q}}a_r\mu(n) = \sum_{j = 1}^k (-1)^{j-1}\binom{k}{j} \sum_{\substack{x< prn_1\cdots n_{2j-1}\leq x+H\\P< p\leq Q\\i\geq j\Longrightarrow n_i\leq (2x)^{1/k}}} a_r\mu(n_j)\cdots \mu(n_{2j-1}).
\end{align*}

Further splitting all the variables into dyadic intervals and adding dummy variables, we end up with a linear combination of $\ll(\log x)^{2k+2}$ sums of the form
\begin{align}\label{eq3}
\sum_{\substack{x< prn_1\cdots n_{2k-1}\leq x+H\\p\in (P_1,2P_1],r\in (R,2R], n_i\in (N_i,2N_i]  \\p\leq Q }} a_ra_1(n_1)\cdots a_{2k-1}(n_{2k-1}),
\end{align}
where
\begin{equation}\label{eq10}
\begin{split}
&P_1\in [P,Q],\quad R\in [1/2,x^{\varepsilon/2}],\quad N_1, \dotsc, N_{k-1} \in [1/2,x], \\ &
N_{k}, \dotsc, N_{2k-1} \in [1/2, (2x)^{1/k}], \quad P_1RN_1\cdots N_{2k-1}\asymp x,
\end{split}
\end{equation}
and for each $i$ we have
\begin{align*}
a_i(n)\equiv\begin{cases} 1\quad \mathrm{or}\quad 1_{n=1},\quad i\leq k-1\\
\mu(n)1_{n \leq (2x)^{1/k}} \quad \mathrm{or} \quad 1_{n=1},\quad k\leq i\leq 2k-1.
\end{cases}
\end{align*}

Recall that $k=20$. What we wish to establish is a comparison principle, which states that
\begin{align}\label{eq5}\begin{split}
&\sum_{\substack{x< prn_1\cdots n_{2k-1}\leq x+H\\p\in (P_1,2P_1],r\in (R,2R], n_i\in (N_i,2N_i]  \\ p\leq Q}} a_ra_1(n_1)\cdots a_{2k-1}(n_{2k-1})\\
&=\frac{H}{y_1}\sum_{\substack{x< prn_1\cdots n_{2k-1}\leq x+y_1\\\\p\in (P_1,2P_1],r\in (R,2R], n_i\in (N_i,2N_i] \\p\leq Q}} a_ra_1(n_1)\cdots a_{2k-1}(n_{2k-1})+O_A\left(\frac{H}{(\log x)^{A}}\right),
\end{split}
\end{align}
with $y_1=x\exp(-3(\log x)^{1/3})$ for any choices of $P_1,R,N_i$ in \eqref{eq10}. Indeed, once we have this, we can recombine these $\ll (\log x)^{2k+2}$ sums into the single sum \eqref{eq2} to obtain
\begin{align}\label{eq7}
\sum_{x < n\leq x+H}\mu(n)= \frac{H}{y_1}\sum_{x < n \leq x+y_1}\mu(n)+O\left(H\frac{\log P}{\log Q}\right),
\end{align}
and by the prime number theorem for the M\"obius function with the Vinogradov--Korobov error term (or by Ramachandra's result \cite{rama}) this becomes the statement of Theorem \ref{theo_mobius}.

\subsection{Arithmetic information}
\label{ssec:arithmetic}

Our first lemma for establishing comparisons of the form \eqref{eq5} is the type I information arising from the case where one of the $N_i$ corresponding to a smooth variable is very long.\footnote{As pointed out by the referee, this lemma would actually follow from Lemmas \ref{le_bilinear} and \ref{le_hbi} below. We have chosen to keep Lemma \ref{le_typeI} here only because it gives a more elementary way of obtaining type I information.}

\begin{lemma}\label{le_typeI}
Suppose that in the sum \eqref{eq3} we have $N_i\gg x^{0.45+2\varepsilon}$ for some $i\leq k-1$. Then \eqref{eq5} holds.
\end{lemma}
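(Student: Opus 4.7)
The plan is to exploit that $a_i(n_i) \equiv 1$ for the long smooth variable, which reduces the inner $n_i$-sum to an integer-counting problem. Group the remaining variables into $M := p r \prod_{j \neq i} n_j$ (note that $M \geq P$ since $p > P_1 \geq P$) and rewrite
\[
S(H) := \sum_{\substack{x < p r n_1 \cdots n_{2k-1} \leq x+H \\ \textrm{dyadic ranges},\, p \leq Q}} a_r \prod_j a_j(n_j) = \sum_M c(M)\, N_M(H),
\]
where $c(M) := \sum_{p r \prod_{j \neq i} n_j = M} a_r \prod_{j \neq i} a_j(n_j)$ is supported on $M \asymp x/N_i$ with $|c(M)| \leq \tau_{O(k)}(M)$, and $N_M(H) := \#\{n_i \in (N_i, 2N_i] \cap (x/M, (x+H)/M]\}$. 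An identical identity holds for $(H/y_1) S(y_1)$, so proving \eqref{eq5} reduces to showing $\sum_M c(M) E(M) \ll_A H/(\log x)^A$, where $E(M) := N_M(H) - (H/y_1) N_M(y_1)$.

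In the \emph{bulk} regime, where the longer interval $(x/M, (x+y_1)/M]$ lies fully inside $(N_i, 2N_i]$, we have $N_M(Y) = Y/M + O(1)$ for both $Y \in \{H, y_1\}$, yielding the crucial cancellation
\[
E(M) = \bigl(\tfrac{H}{M} + O(1)\bigr) - \tfrac{H}{y_1}\bigl(\tfrac{y_1}{M} + O(1)\bigr) = O(1).
\]
Applying the trivial bound $\sum_M |c(M)| \ll (x/N_i)(\log x)^{O(k)}$, obtained by counting tuples with $|a_r| \leq \tau(r)$ and $|a_j| \leq 1$, the bulk contribution is bounded by $(x/N_i)(\log x)^{O(k)} \ll x^{0.55-2\varepsilon}(\log x)^{O(k)}$. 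Using $N_i \gg x^{0.45+2\varepsilon}$ and $H = x^{0.55+\varepsilon}$, this is $\ll_A H/(\log x)^A$ for every fixed $A$.

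The remaining \emph{boundary} region is the set of $M$ lying in two short intervals near $x/(2N_i)$ and $x/N_i$, of total length $\ll y_1/N_i$. A direct case-check on which endpoint of $(N_i, 2N_i]$ truncates each of the $H$- and $y_1$-intervals yields the uniform bound $|E(M)| \leq 2H/M + O(1)$, using only $N_M(Y) \leq Y/M + 1$ and that each intersection has length at most $y_1/M$. Since $M \geq P = \exp((\log x)^{2/3+\varepsilon/2})$, the constraint $\log M \geq (\log x)^{2/3+\varepsilon/2} \gg (\log x)^{1/3}$ ensures Shiu's theorem applies to the boundary intervals, giving $\sum_{M \in J} |c(M)| \ll |J|(\log x)^{O(k)}$. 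Consequently the boundary contribution is
\[
\sum_{M \in \textrm{boundary}} |c(M)| \cdot \frac{2H}{M} \ll H \cdot \frac{N_i}{x} \cdot \frac{y_1}{N_i}(\log x)^{O(k)} = \frac{H y_1}{x}(\log x)^{O(k)} = H \exp\bigl(-3(\log x)^{1/3}\bigr)(\log x)^{O(k)},
\]
which is $\ll_A H/(\log x)^A$ since $\exp(-3(\log x)^{1/3})$ decays faster than any negative power of $\log x$. Combining bulk and boundary contributions proves \eqref{eq5}.

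The main obstacle I anticipate is ensuring Shiu's bound applies uniformly to the boundary intervals across the full parameter range; the lower bound $M \geq P$ is precisely what makes this feasible. In the extreme subcase where a boundary interval has length $<1$ (which can only occur when $N_i$ is nearly $x/P$), one instead estimates the single surviving term via $|c(M)| \ll M^{o(1)}$ combined with $H/M \leq H/P$, yielding a bound $\ll H \exp\bigl(-(1-o(1))(\log x)^{2/3+\varepsilon/2}\bigr)$ which is still $\ll_A H/(\log x)^A$.
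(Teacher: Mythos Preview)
Your proof is correct and follows the same strategy as the paper: collapse all variables except the long smooth $n_i$ into a single divisor-bounded coefficient and evaluate the $n_i$-sum trivially. The paper compresses your bulk/boundary split into a one-line reference to Harman's book; your explicit treatment of the boundary via Shiu's bound (using $M\geq P$ to guarantee the interval is long enough) is a correct way to unpack that citation, though your ``extreme subcase'' of boundary length $<1$ cannot actually arise, since $N_i\ll x/P$ forces $y_1/N_i\gg P\exp(-3(\log x)^{1/3})\to\infty$.
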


\begin{proof}
The difference of the two sums on different sides of \eqref{eq5} is of the type
\begin{align*}
\sum_{\substack{x< mn\leq x+H\\n\in (N_i,2N_i]}}b_m - \frac{H}{y_1}\sum_{\substack{x< mn\leq x+y_1\\n\in (N_i,2N_i]}}b_m
\end{align*}
with $H\asymp x^{0.55+\varepsilon}$, $N_i\gg x^{0.45+2\varepsilon}$, $y_1=x\exp(-3(\log x)^{1/3})$, and with $b_m$ a divisor-bounded sequence. Thus the result follows trivially by summing over the $n$ variable first (cf. \cite[p.128]{HarmanBook}).
\end{proof}

When the above type I information is not applicable, we move to Dirichlet polynomials in order to obtain type II and type I/II information. As usual (see for instance \cite[Chapter 7]{HarmanBook}), we may apply Perron's formula to reduce to Dirichlet polynomials.

\begin{lemma}\label{le_perron} Let $T_0=\exp((\log x)^{1/3})$ and define the Dirichlet polynomials $P(s)=\sum_{P_1 < p\leq \min\{2 P_1, Q\}}p^{-s}$, $R(s)=\sum_{R < r\leq 2R}a_r r^{-s}$, $N_i(s)=\sum_{N_i < n\leq 2N_i}a_i(n)n^{-s}$. Suppose that
\begin{align}\label{eq6}
\int_{T_0}^{x^{1+\varepsilon/10}/H}\left|P\left(\frac{1}{2}+it\right)R\left(\frac{1}{2}+it\right)N_1\left(\frac{1}{2}+it\right)\cdots N_{2k-1}\left(\frac{1}{2}+it\right)\right|\, dt\ll_{A} x^{1/2}(\log x)^{-A}
\end{align}
for all $A>0$. Then we have \eqref{eq5}.

\end{lemma}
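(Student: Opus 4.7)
The plan is as follows. Writing $F(s) = P(s)R(s)N_1(s)\cdots N_{2k-1}(s) = \sum_n a(n) n^{-s}$, a Dirichlet polynomial of length $\asymp x$ whose coefficients satisfy $|a(n)| \leq \tau_K(n)$ for some $K = K(k)$, I would first apply the truncated Perron formula at the line $\Re s = 1/2$ with height $T := x^{1+\varepsilon/10}/H$ to each of the two sums appearing in \eqref{eq5}. The standard Perron estimate combined with Shiu's bound yields
\begin{align*}
\sum_{x < n \leq x+H_0} a(n) = \frac{1}{2\pi i}\int_{1/2-iT}^{1/2+iT} F(s)\,\frac{(x+H_0)^s - x^s}{s}\,ds + O_A\!\left(\frac{H}{(\log x)^A}\right)
\end{align*}
for both $H_0 = H$ and $H_0 = y_1$, the error coming from the standard truncation bound $O(x(\log x)^{O(1)}/T)$. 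Subtracting these identities (and multiplying the $y_1$-identity by $H/y_1 \leq 1$), establishing \eqref{eq5} is reduced to the bound
\begin{align*}
\int_{-T}^{T} F(\tfrac{1}{2}+it)\,K(\tfrac{1}{2}+it)\,dt \ll_A \frac{H}{(\log x)^A}
\end{align*}
for the difference kernel $K(s) := \bigl((x+H)^s - x^s\bigr)/s - (H/y_1)\bigl((x+y_1)^s - x^s\bigr)/s$.

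I would then split this integral into the high-frequency range $T_0 < |t| \leq T$ and the low-frequency range $|t| \leq T_0$. In the high-frequency range, the elementary estimates $|((x+H_0)^s - x^s)/s| \leq \min(H_0 x^{-1/2},\, 2x^{1/2}/|t|)$ (applied with $H_0 = H$ directly and with $H_0 = y_1$ weighted by $H/y_1$) combine to give $|K(\tfrac{1}{2}+it)| \ll \min(H x^{-1/2},\, x^{1/2}/|t|)$. Splitting $T_0 < |t| \leq T$ dyadically at the transition $|t| \asymp x/H$ --- using the $Hx^{-1/2}$ bound on $|K|$ for $T_0 < |t| \leq x/H$ and the $x^{1/2}/|t|$ bound (telescoped over dyadic windows $V \leq |t| \leq 2V$) for $|t| > x/H$ --- and invoking the hypothesis \eqref{eq6} on each dyadic piece yields the desired $O_A(H/(\log x)^A)$ bound on this portion of the integral.

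For the low-frequency range, the crucial cancellation in $K(s)$ emerges from a Taylor expansion: for $s = \tfrac{1}{2} + it$ with $|t| \leq T_0$, expanding $\int_x^{x+H_0} u^{s-1}\,du$ about $u = x$ gives $((x+H_0)^s - x^s)/s = H_0\, x^{s-1} + O(|s|\,H_0^2\,x^{-3/2})$, so the leading $H\,x^{s-1}$ terms cancel in $K(s)$. Using $y_1 \gg H$ and $|s| \ll T_0$ then yields
\begin{align*}
|K(\tfrac{1}{2}+it)| \ll T_0\,H\,y_1\,x^{-3/2} = H x^{-1/2}\exp(-2(\log x)^{1/3}),
\end{align*}
via the identity $T_0\,y_1/x = \exp(-2(\log x)^{1/3})$ that follows from $T_0 = \exp((\log x)^{1/3})$ and $y_1 = x\exp(-3(\log x)^{1/3})$. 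Combining this pointwise bound with Cauchy--Schwarz and the mean-value theorem for Dirichlet polynomials --- which give $\int_{-T_0}^{T_0}|F(\tfrac{1}{2}+it)|\,dt \ll T_0^{1/2}\, x^{1/2}(\log x)^{O(1)}$ because $T_0 \ll x$ --- the low-frequency contribution is $\ll H\exp(-\tfrac{3}{2}(\log x)^{1/3})(\log x)^{O(1)}$, which is $O_A(H/(\log x)^A)$ for every $A$.

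I expect the most delicate step to be the low-frequency estimate, which depends critically on the smallness of $y_1/x = \exp(-3(\log x)^{1/3})$ to produce the required exponential decay after Taylor expansion and after absorbing the $T_0^{1/2}$ loss from Cauchy--Schwarz. The high-frequency portion is a transparent application of \eqref{eq6}, and the Perron truncation error is comfortably absorbed by the choice $T = x^{1+\varepsilon/10}/H$, which provides an extra $x^{\varepsilon/10}$ saving over the polylog factors coming from Shiu's bound.
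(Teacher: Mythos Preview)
Your proposal is correct and follows the same standard Perron-formula argument that the paper invokes by citing \cite[Lemma 7.2]{HarmanBook}; you have simply spelled out the details (Perron truncation, contour on $\Re s = 1/2$, splitting at $|t|=T_0$ with the Taylor cancellation on the low-frequency part and the dyadic decomposition using \eqref{eq6} on the high-frequency part) that the paper leaves to that reference.
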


\begin{proof}
This is a standard application of Perron's formula detailed in \cite[Lemma 7.2]{HarmanBook}.
\end{proof}

The following lemma gives us type II information where the small prime $p$ arising from Ramar\'e's identity is crucial. In that lemma and later, for a positive integer $K$, we use the notation $[K] = \{1, 2, \dotsc, K\}$.

\begin{lemma}\label{le_bilinear} Let the notation be as in Section~\ref{ssec:HB}. Suppose that there is a subset $I\subset [2k-1]$ with $\prod_{i\in I}N_i\in [x^{0.45-\varepsilon/2},x^{0.55+\varepsilon/2}]$. Then \eqref{eq5} holds.
\end{lemma}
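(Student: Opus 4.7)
The plan is to reduce via Lemma~\ref{le_perron} to the mean-value bound \eqref{eq6}, and then to exploit the small prime polynomial $P(s)$ in order to save arbitrarily many powers of $\log x$ pointwise, before invoking Cauchy--Schwarz and the mean value theorem on the remaining factors.

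Write $N_I := \prod_{i \in I} N_i$ and $N_{I^c} := \prod_{i \notin I} N_i$, so that by hypothesis $N_I \in [x^{0.45-\varepsilon/2}, x^{0.55+\varepsilon/2}]$ and $P_1 R N_I N_{I^c} \asymp x$. I would split the integrand of \eqref{eq6} as $P(1/2+it) \cdot f(1/2+it) \cdot g(1/2+it)$, where
$$f(s) := \prod_{i \in I} N_i(s), \qquad g(s) := R(s)\prod_{i \notin I} N_i(s)$$
have lengths $N_I$ and $RN_{I^c} = x/(P_1 N_I) \in [x^{0.45-\varepsilon/2}/P_1, x^{0.55+\varepsilon/2}/P_1]$, respectively. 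Using $P_1 \leq Q = x^{o(1)}$ and the elementary inequality $\varepsilon/2 < 9\varepsilon/10$, one checks that both $N_I$ and $RN_{I^c}$ exceed $T := x^{1+\varepsilon/10}/H = x^{0.45-9\varepsilon/10}$; in particular $T + N_I \asymp N_I$ and $T + RN_{I^c} \asymp RN_{I^c}$.

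Next I would invoke the Vinogradov--Korobov pointwise bound for short prime Dirichlet polynomials on the critical line. Since $\log P_1 \geq (\log x)^{2/3+\varepsilon/2}$ while $\log(|t|+2) \leq (1+o(1))\log x$ throughout the integration range, this bound yields
$$\sup_{T_0 \leq |t| \leq T}|P(1/2+it)| \ll_{A} P_1^{1/2}(\log x)^{-A}$$
for every fixed $A > 0$. Combining this pointwise saving with Cauchy--Schwarz applied to $f \cdot g$, and the mean value theorem for Dirichlet polynomials (which applies since the coefficients of $f$ and $g$ are divisor-bounded), the integral in \eqref{eq6} is at most
\begin{align*}
\sup_{t}|P(1/2+it)| \cdot \bigl((T+N_I)(T+RN_{I^c})\bigr)^{1/2}(\log x)^{O(1)}
&\ll \frac{P_1^{1/2}(N_I \cdot RN_{I^c})^{1/2}}{(\log x)^{A - O(1)}} \\
&\asymp \frac{x^{1/2}}{(\log x)^{A - O(1)}},
\end{align*}
using $N_I \cdot RN_{I^c} \asymp x/P_1$. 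For $A$ sufficiently large this establishes \eqref{eq6} and hence, via Lemma~\ref{le_perron}, the comparison \eqref{eq5}.

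The only nontrivial step is the pointwise Vinogradov--Korobov bound for $P(1/2+it)$, and the whole point of Ramar\'e's extraction in Section~\ref{sec:Ramare} is to arrange the prime polynomial to have length $P_1 \in [\exp((\log x)^{2/3+\varepsilon/2}), x^{1/(\log \log x)^2}]$: large enough for the $(\log x)^{-A}$ saving, but small enough that $RN_{I^c} = x/(P_1 N_I)$ remains in essentially the same window as $N_I$, keeping both below $T$ in the ``$T+\cdot \asymp \cdot$'' sense required above. Without such a factor (as in the von Mangoldt setting), this worst-case bilinear configuration could not be closed by pointwise plus Cauchy--Schwarz plus mean value theorem alone.
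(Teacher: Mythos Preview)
Your proof is correct and follows essentially the same route as the paper: reduce to \eqref{eq6} via Lemma~\ref{le_perron}, use the Vinogradov--Korobov zero-free region to save $(\log x)^{-A}$ pointwise in $P(1/2+it)$, and then close with Cauchy--Schwarz and the mean value theorem on the remaining two factors. The one cosmetic difference is that you absorb $R(s)$ into the complement factor $g(s)$, whereas the paper bounds $R(s)$ trivially by $R^{1/2}\log R$ and applies Cauchy--Schwarz only to $\prod_{i\in I}N_i$ and $\prod_{i\notin I}N_i$. Your grouping is in fact slightly cleaner: since $RN_{I^c}\asymp x/(P_1 N_I)$ automatically lies above $T=x^{0.45-9\varepsilon/10}$, the cross term $T\cdot RN_{I^c}$ in the mean-value bound is dominated by $N_I\cdot RN_{I^c}\asymp x/P_1$ and the estimate closes with no $\varepsilon$-loss, whereas the paper's version incurs a harmless but extraneous $x^{O(\varepsilon)}$ from the cross term $RP_1\cdot T\cdot N_I$.
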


\begin{proof}
By Lemma \ref{le_perron}, it suffices to show \eqref{eq6}. Note first that writing $\sigma = 1/(\log X)^{2/3+\varepsilon/3}$ we have by the Vinogradov--Korobov zero-free region, for any $|t| \leq x^2$,
\[
|P(1+it)| \ll P^{-\sigma} (\log x)^3 + \frac{\log x}{|t|+1}
\]
(see e.g. \cite[Proof of Lemma 2]{mr-note}). Recalling that $P\geq \exp((\log x)^{2/3+\varepsilon/2})$, this gives
\[
|P(1+it)| \ll_A (\log x)^{-A}
\] 
for any $A > 0$ and $|t| \in [T_0, x^{1+\varepsilon/10}/H]$. Now we can bound the left hand side of~\eqref{eq6} by using this bound to $P(s)$, the trivial bound to $R(s)$, and Cauchy--Schwarz and the mean value theorem (\cite[Theorem 9.1]{iw-kow}) to the remaining Dirichlet polynomials, obtaining, for any $A > 0$, a bound of
\begin{align*}
&\ll_A R^{1/2} \log R \cdot (\log x)^{-A} \\
&\quad \cdot\left( \int_{T_0}^{x^{1+\varepsilon/10}/H}\left|\prod_{i\in I}N_i\left(\frac{1}{2}+it\right)\right|^2\, dt \int_{T_0}^{x^{1+\varepsilon/10}/H}\left|\prod_{i\in [2k-1]\setminus I}N_i\left(\frac{1}{2}+it\right)\right|^2\, dt \right)^{1/2}\\
&\ll R^{1/2}P^{1/2}(\log x)^{-A} \left(x^{1+\varepsilon/10}/H+\prod_{i\in I}N_i\right)^{1/2}\left(x^{1+\varepsilon/10}/H+\prod_{i\in [2k-1]\setminus I}N_i\right)^{1/2}\\
&\ll x^{1/2}(\log x)^{-A},
\end{align*}
as desired.
\end{proof}

Finally we have the following type I/II information for trilinear sums with one smooth variable.
\begin{lemma}[Heath-Brown--Iwaniec]\label{le_hbi} Let the notation be as in Section~\ref{ssec:HB}. Suppose that there exists an index $r$ such that $[2k-1]\setminus \{r\}$ can be partitioned into two sets $I$ and $J$ such that $\prod_{i\in I}N_i\ll x^{0.46+\varepsilon/8}$ and $\prod_{i\in J}N_i\ll x^{0.46+\varepsilon/8}$. Then \eqref{eq5} holds.
\end{lemma}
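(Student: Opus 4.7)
My plan is to deduce Lemma \ref{le_hbi} essentially as a direct application of the Heath-Brown--Iwaniec large values estimate from \cite{H-BI79}, the same lemma that underpins their $0.55$-exponent result on primes in short intervals and the ultimate source of the discontinuous $0.55$ threshold. First I would appeal to Lemma \ref{le_perron} to reduce the task to establishing the Dirichlet integral bound \eqref{eq6}.

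Next I would regroup the polynomials on the left-hand side of \eqref{eq6} into three factors matching the shape of the H-B/I hypothesis: a smooth zeta partial sum
\[
A(s) := N_r(s),
\]
where I take the distinguished index to satisfy $r \leq k-1$ so that $a_r(n)\equiv 1$ and $A(s)$ is a genuine partial sum of $\zeta$, together with two divisor-bounded factors
\[
B(s) := P(s) R(s) \prod_{i \in I} N_i(s), \qquad C(s) := \prod_{j \in J} N_j(s).
\]
Since $P_1 \leq Q = x^{o(1)}$ and $R \leq x^{\varepsilon/2}$, both $B$ and $C$ have length $\ll x^{0.46+\varepsilon/4}$, while $A$ has length $N_r$ controlled by the constraint $P_1 R \prod_i N_i \asymp x$. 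After this regrouping the integral in \eqref{eq6} takes the form $\int_{T_0}^{x^{1+\varepsilon/10}/H} |A B C|(1/2+it)\,dt$, and an application of the H-B/I large values argument (as in \cite{H-BI79}, with modernized bookkeeping along the lines of \cite{bhp2}) delivers the required bound $\ll_A x^{1/2}(\log x)^{-A}$.

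The main obstacle I expect is the sharpness of the H-B/I estimate at $\theta=0.55$: the lemma balances the length $x^{0.46}$ of the bilinear factors against Huxley's zero-density exponent $12/5$ for $\zeta$, and needs $\theta$ to beat $0.55$ by a definite amount. One must verify carefully that the $\varepsilon$-slack in our hypothesis ($x^{0.46+\varepsilon/8}$ on $|B|, |C|$) fits inside the tolerance built into the H-B/I large values argument, together with the additional $x^{\varepsilon/2}$ factor absorbed through $P(s)R(s)$. A secondary subtlety is the case in which the distinguished index $r$ might not correspond to a smooth polynomial (i.e.\ $r\geq k$); however, then $N_r \leq x^{1/k} = x^{1/20}$ is short enough that one can absorb $N_r$ into $B$ or $C$ without violating the $x^{0.46+O(\varepsilon)}$ length constraint, and choose the true smooth factor from whichever $N_i$ with $i \leq k-1$ is largest; the remaining polynomials then still satisfy a partition of the required type.
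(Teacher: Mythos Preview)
Your approach is essentially the same as the paper's: reduce via Lemma~\ref{le_perron} to \eqref{eq6}, group into one zeta partial sum $N_r(s)$ and two divisor-bounded factors of length $\ll x^{0.46+O(\varepsilon)}$ (absorbing $P(s)R(s)$), and invoke the Heath-Brown--Iwaniec lemma (specifically \cite[Lemma~2]{H-BI79}, or \cite[Lemma~10.12]{HarmanBook}). The only point where you take a detour is the ``secondary subtlety'': in fact the case $r\geq k$ never occurs, since $P_1 R\prod_i N_i\asymp x$ forces $N_r\gg x/(x^{o(1)}\cdot x^{\varepsilon/2}\cdot x^{0.92+\varepsilon/4})\gg x^{0.079}$, which exceeds $(2x)^{1/k}=(2x)^{1/20}$, so $r\leq k-1$ is automatic and your workaround is unnecessary.
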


\begin{proof} Since $k=20$ and $N_r\gg x/\prod_{i\in [2k-1]\setminus \{r\}}N_i\gg x^{0.079}$, we must have $r\leq k-1$ in \eqref{eq3}, so the polynomial $N_r(s)$ is a partial sum of the zeta function of the form $\sum_{N_r< n\leq 2N_r}n^{-s}$. Then we may apply a lemma of Heath-Brown and Iwaniec \cite[Lemma 2]{H-BI79} (alternatively see \cite[Lemma 10.12]{HarmanBook}) to conclude.
\end{proof} 

There would be a lot more arithmetic information available, see e.g~\cite[Section 10.5]{HarmanBook}. However, none of this handles for $\theta < 0.55$ the case where one has five smooth variables of size $x^{1/5+o(1)}$, so this additional information would not help us.

\subsection{Combining the results}
\label{ssec:combine}

Let $N_i=x^{\alpha_i}$ in \eqref{eq3} for some real numbers $0\leq \alpha_i\leq 1$. Combining Lemmas \ref{le_typeI}, \ref{le_bilinear} and \ref{le_hbi} (of which the first one is actually included in the other two), it clearly suffices to prove the following combinatorial lemma, after which we obtain the comparison \eqref{eq5} for all choices of the $N_i$, and thus obtain \eqref{eq7}.

\begin{lemma}
Let $\varepsilon>0$ be small, and let $K$ be a positive integer. Let $\alpha_1, \dotsc, \alpha_{K} \in (0, 1]$ be such that $\sum_{i=1}^K \alpha_i \in [1-\varepsilon, 1]$. Then one of the following holds.
\begin{enumerate}
\item There exists a subset $I \subset [K]$ such that $\sum_{i \in I} \alpha_i \in [0.45, 0.55]$.
\item There exists a partition $[K] = I_1 \cup I_2 \cup \{r\}$ such that one has $\sum_{i \in I_j} \alpha_i \leq 0.46$ for $j = 1, 2$.
\end{enumerate}
\end{lemma}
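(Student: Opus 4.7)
The plan is to assume alternative (1) fails---so no subset $I \subset [K]$ satisfies $\sum_{i \in I} \alpha_i \in [0.45, 0.55]$---and derive alternative (2). Since singletons are candidate $I$'s, (1) failing forces $\alpha_i \in (0, 0.45) \cup (0.55, 1]$ for every $i$. First I dispose of the easy case: if some $\alpha_r > 0.55$, then $\sum_{i \neq r} \alpha_i < 0.45 < 0.46$, and (2) holds with $I_1 = [K] \setminus \{r\}$ and $I_2 = \emptyset$. From now on I may assume every $\alpha_i < 0.45$.

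Next I invoke a short greedy observation to show $\max_i \alpha_i > 0.10$: listing the $\alpha_i$ in any order and forming partial sums, successive sums differ by at most $\max_i \alpha_i$, yet the partial sums traverse $[0, S]$ with $S \geq 1 - \varepsilon > 0.55$; if every $\alpha_i \leq 0.10$, at least one partial sum must land in $[0.45, 0.55]$, contradicting (1) failing. So I pick $r$ with $\alpha_r = \max_i \alpha_i$; then $\alpha_r > 0.10$ and $T := S - \alpha_r < 0.9$. If $T \leq 0.46$, the choice $I_1 = [K] \setminus \{r\}$, $I_2 = \emptyset$ already yields (2), so I may further assume $T > 0.46$.

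The key step will be to take $J^* \subset [K] \setminus \{r\}$ to be a subset with $\sum_{i \in J^*} \alpha_i \leq 0.46$ that maximizes $\sum_{i \in J^*} \alpha_i$, and to show $\sum_{i \in J^*} \alpha_i \geq T - 0.46$. Granted this, the partition $I_1 := J^*$, $I_2 := ([K] \setminus \{r\}) \setminus J^*$ gives (2), since $\sum_{i \in I_1} \alpha_i \leq 0.46$ by construction and $\sum_{i \in I_2} \alpha_i = T - \sum_{i \in J^*} \alpha_i \leq 0.46$. To prove the bound, I assume $\sum_{i \in J^*} \alpha_i < T - 0.46$ for contradiction. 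Since $\sum_{i \in J^*} \alpha_i \leq 0.46 < T$, the containment $J^* \subsetneq [K] \setminus \{r\}$ is strict, so there is some $j \in ([K] \setminus \{r\}) \setminus J^*$. By maximality of $J^*$, $\sum_{i \in J^*} \alpha_i + \alpha_j > 0.46$; and by the failure of (1), this sum does not lie in $[0.45, 0.55]$, so in fact it exceeds $0.55$. Therefore
\[
\alpha_j > 0.55 - \sum_{i \in J^*} \alpha_i > 0.55 - (T - 0.46) = 1.01 - S + \alpha_r \geq 0.01 + \alpha_r,
\]
contradicting the choice of $\alpha_r$ as the maximum.

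The main subtlety will be the interplay between the forbidden interval $[0.45, 0.55]$ and the threshold $0.46$ in (2): the forbidden interval promotes the elementary bound $\sum_{i \in J^*} \alpha_i + \alpha_j > 0.46$ to the sharper $> 0.55$, and the constraint $S \leq 1$ then converts the resulting slack of exactly $0.01 = 0.46 - 0.45$ into a strict violation of the maximality of $\alpha_r$. Apart from this bookkeeping, the argument will be entirely elementary.
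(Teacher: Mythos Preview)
Your proof is correct, and the core mechanism---using maximality of a subset sum together with the forbidden interval $[0.45,0.55]$ to force a jump past $0.55$---is the same as in the paper. However, the paper executes it more directly: it simply takes $I\subset[K]$ with $\sum_{i\in I}\alpha_i\le 0.55$ maximal (hence in fact $<0.45$), notes that $I\neq[K]$ since the total sum exceeds $0.55$, picks \emph{any} $r\in[K]\setminus I$, and observes that maximality forces $\sum_{i\in I\cup\{r\}}\alpha_i>0.55$, whence the complement $I_2=[K]\setminus(I\cup\{r\})$ has sum $<1-0.55<0.46$. By choosing the set first and the singleton second (rather than fixing $r=\arg\max_i\alpha_i$ first, as you do), the paper sidesteps your case split on whether some $\alpha_i>0.55$, the partial-sums argument showing $\max_i\alpha_i>0.10$ (which, incidentally, you never actually use---neither $\alpha_r>0.10$ nor $T<0.9$ appears again), and the final contradiction step. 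Both arguments are elementary; the paper's is just shorter.
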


\begin{proof}
We can assume that there is no subset $I \subset [K]$ for which $\sum_{i \in I} \alpha_i \in [0.45, 0.55]$ since otherwise we are in case (1).

Let then $I \subset [K]$ be the subset with the largest $\sum_{i \in I} \alpha_i \leq 0.55$ (which actually must be $< 0.45$). Now, for any $r \in [K] \setminus I$ one has $\alpha_r + \sum_{i \in I} \alpha_i > 0.55$ since otherwise we contradict $I$ having the largest sum. Consequently we are in case (2) with $I_1 = I$ and $I_2 = [K] \setminus (I \cup \{r\})$ (since $\sum_{i \in I_2} \alpha_i < 1-0.55 < 0.46$). 
\end{proof}

\begin{remark}
As pointed out by the referee, instead of Heath-Brown's identity we could apply the Vaughan type identity
\[
\frac{1}{\zeta(s)} = \left(\frac{1}{\zeta(s)} - M(s)\right) (1-\zeta(s) M(s)) + 2M(s) - \zeta(s) M(s)^2
\]
with $M(s) = \sum_{n \leq x^{0.45}} \mu(n) n^{-s}$ as this gives rise only to terms that can be handled through Lemmas~\ref{le_bilinear} and~\ref{le_hbi}. However, in the proof of the more general Proposition~\ref{prop_periodic}, we will anyway have to use a decomposition that leads to similar terms as Heath-Brown's identity.
\end{remark}

\subsection{The twisted case}
\label{ssec:Zhan}
In this section we outline how our argument can be combined with that of Zhan to prove Theorem~\ref{theo_Zhan}. As Zhan, we start by introducing a rational approximation 
\[
\alpha = \frac{a}{q} + \lambda, \quad (a, q) = 1, \quad |\lambda| \leq \frac{1}{q\tau}, \quad 1 \leq q \leq \tau = H^2 x^{-1} (\log x)^{-B}
\]
for some large $B > 0$. Zhan has already proved Theorem~\ref{theo_Zhan} in the minor arc case $q > (\log x)^B$ (see~\cite[Theorem 2]{Zhan92} which is stated for the von Mangoldt function but the same proof works for the M\"obius function). Hence we can concentrate on the major arc case $q \leq (\log x)^B$. 

We have, similarly to Lemma~\ref{le_ramare},
\begin{align*}
\sum_{x < n\leq x+H}\mu(n)e(\alpha n) =-\sum_{\substack{x< prn\leq x+H\\P< p\leq Q\\r\leq x^{\varepsilon/2}}}a_r\mu(n) e(\alpha prn)+O\left(H\frac{\log P}{\log Q}\right).
\end{align*}
As in proof of Theorem~\ref{theo_mobius}, we use Heath-Brown's identity to decompose this into $\ll(\log x)^{2k+2}$ sums of the form
\begin{align*}
\sum_{\substack{x< prn_1\cdots n_{2k-1}\leq x+H\\i\geq k\Longrightarrow n_i\leq (2x)^{1/k}\\p\in (P_1,2P_1],r\in (R,2R], n_i\in (N_i,2N_i]  \\P<p\leq Q }} a_ra_1(n_1)\cdots a_{2k-1}(n_{2k-1}) e(\alpha prn_1 \dotsm n_{2k-1}),
\end{align*} 
with same notation and conditions on the variables as in Section~\ref{ssec:HB}.

Now, we would like to show the comparison principle~\eqref{eq5} with both main terms twisted by $e(\alpha prn_1 \dotsm n_{2k-1})$. The argument Zhan uses in the minor arc case (see \cite[Proof of Theorem 2]{Zhan92}) reduces this to mean values of Dirichlet polynomials through moving into character sums and using partial integration, Perron's formula and the first derivative test.

To state the required mean value result, we introduce the notation 
\[
T_1 =  4\pi(|\lambda| x + x/H) \quad \text{and} \quad F(s, \chi) = P(s, \chi)R(s, \chi)N_1(s, \chi)\cdots N_{2k-1}(s, \chi),
\]
where the Dirichlet polynomials are as in Lemma~\ref{le_perron} but twisted by $\chi$. Then, slightly modifying Zhan's argument from~\cite[Section 3, see in particular formulas (3.11)--(3.12)]{Zhan92}, noting that we have somewhat different notation, we see that it suffices to prove that, for all $A>0$,
\begin{align}
\label{eq:ZhanClaim}
\sum_{\chi \pmod q} \int_{T}^{T + x/H}\left|F\left(\frac{1}{2}+it, \chi\right)\right|\, dt \ll_{A} (qx)^{1/2}(\log x)^{-A} \quad \text{for $T \in [T_0, T_1]$}
\end{align}
and
\begin{align*}
&\sum_{\chi \pmod q} \int_{T}^{2T}\left|F\left(\frac{1}{2}+it, \chi\right)\right|\, dt \ll_{A} \frac{T}{x/H} \cdot (qx)^{1/2}  (\log x)^{-A} \quad \text{for $T \geq T_1$}.
\end{align*}
The second claim is easier than the first since all the bounds one uses for proving \eqref{eq:ZhanClaim} depend at most linearly on the length of the integration interval.

Zhan proves~\eqref{eq:ZhanClaim} for $q > (\log x)^B$. As in our proof of Theorem~\ref{theo_mobius}, he splits into three cases --- type I sums, type $\textrm{I}_2$ sums and type II sums. Zhan's type I and type $\textrm{I}_2$ estimates (\cite[Propositions 1 and 2]{Zhan92}) based on second and fourth moments of $L$-functions in short intervals work directly also for $q \leq (\log x)^B$.

Hence it suffices to show that also Zhan's type II bound \cite[Proposition 3]{Zhan92} holds in our situation, with the upper bound in \cite[(3.16)]{Zhan92} replaced by $H x^{-\varepsilon/10}$ (this replacement can be done since we only aim for intervals of length $x^{3/5+\varepsilon}$ rather than $x^{3/5} (\log x)^A$). But here we can utilize the short polynomial by using the pointwise estimate $|P(1/2+it) R(1/2+it)| \ll_A (PR)^{1/2} (\log x)^{-A}$. After that we can use Cauchy-Schwarz and mean value theorem for Dirichlet polynomials exactly as Zhan who got his saving from the estimate $1 \leq q^{1/2} (\log x)^{-A}$ which holds only in the minor arc case. \qed

\section{Multiplicative functions and almost primes in short intervals}
In this section we describe how the proof of Theorem~\ref{theo_mobius} needs to be modified to prove Proposition~\ref{prop_periodic} and Theorem~\ref{theo_E2}. 

\subsection{Eventually periodic multiplicative functions}
\label{ssec:period}

\begin{proof}[Proof of Proposition \ref{prop_periodic}.]
The first difference compared to proof of Theorem~\ref{theo_mobius} is that instead of Lemma \ref{le_ramare} we apply Remark \ref{rem_f} that generalizes it to multiplicative functions. This gives us
\[
\sum_{x<n\leq x+H}f(n)= \sum_{\substack{x< prn\leq x+H\\P< p\leq Q\\r\leq x^{\varepsilon/2}}}f(p)a_rf(n)+O\left(\frac{H}{\log x}\prod_{p\in [1,x]\setminus [P,Q]}\left(1+\frac{|f(p)|}{p}\right)\right),
\]
where $a_r=(\lambda^{+}f*wf)(r)$.

Next we provide a Heath-Brown type combinatorial decomposition for $f(n)$ (Drappeau and Topacogullari~\cite{DrappeauTopa} also provide combinatorial decompositions for $f(n)$, but we show an alternative way to obtain a suitable decomposition). Letting $K = \lfloor 1000 \log \log x\rfloor $ and $w = x^{1/K}$, we may write
\begin{align}\begin{split}
\label{eq:splitS}
&\sum_{\substack{x< prn\leq x+H\\P< p\leq Q\\r\leq x^{\varepsilon/2}}}f(p)a_rf(n)\\
&=\sum_{\substack{0 \leq \ell \leq 60\\0\leq k\leq K}} \frac{1}{\ell! k!}\sum_{\substack{x< p r m p_1 \dotsm p_k q_1 \dotsm q_{\ell} \leq x+H\\P< p\leq Q\\ w < p_i \leq x^{1/60}\\ q_i > x^{1/60}\\ p'\mid m\Longrightarrow p'\leq w}} a_r f(p)f(p_1)\cdots f(p_k)f(q_1)\cdots f(q_{\ell})f(m) + O\left(\frac{H}{w}\right).
\end{split}
\end{align}
Note that we can restrict the $m$ variable above to be $\leq x^{\varepsilon/2}$ in size, adding an acceptable error $O(H/(\log x)^{10})$ (cf. the proof of Lemma \ref{le_ramare}), so $rm$ plays just the same role as the $r$ variable in case of the M\"obius function. 

For each $b \pmod D$, let $a_b$ be such that $f(q) = a_b$ for every prime $q > n_0$ with $q \equiv b \pmod{D}$. Then, for every prime $q > \max\{D, n_0\}$,
\[
f(q) = \sum_{\chi \pmod{D}} \left( \frac{1}{\varphi(D)} \sum_{b \pmod{D}} a_b \overline{\chi(b)} \right) \chi(q) = \sum_{\chi\pmod D}c_{\chi}\chi(q),
\]
say, where $|c_{\chi}|\ll 1$.

We use this expansion for each variable $q_i$ in~\eqref{eq:splitS}. Thus we are left with obtaining the comparision principle for sums of the form
\begin{align*}
\sum_{\substack{0 \leq \ell \leq 60\\0\leq k\leq K}} \frac{1}{\ell! k!}\sum_{\substack{x< p r m p_1 \dotsm p_k q_1 \dotsm q_{\ell} \leq x+H\\P< p\leq Q\\ w < p_i \leq x^{1/60}\\ q_i > x^{1/60}\\ p'\mid m\Longrightarrow p'\leq w}} a_r f(p) f(p_1) \dotsm f(p_k) \chi_1(q_1)\cdots \chi_{\ell}(q_{\ell})f(m),  
\end{align*}
with $\chi_i$ any Dirichlet characters $\pmod D$.

For the $q_i$ variables, we introduce the von Mangoldt weight and then apply Heath-Brown's identity (e.g. with $k = 20$). We split the resulting sums as well as sums over $p$ and $r$ dyadically, getting $\ll (\log x)^{39 \ell+2}$ sums. Note that for the $\ell=0$ terms one does not need to use Heath-Brown's identity, since in those terms all the variables already have length $\leq x^{1/60}$.

If we for a while ignore the issue that $k$ (the number of primes $p_i$) is sometimes large, then we end up with sums essentially of the form \eqref{eq3}, with the $a_i(n)$ being slightly different but having the crucial property that any sequence $a_i(n)$ supported outside $[1,(2x)^{1/20}]$ is of the form $\chi(n)$ with $\chi$ a Dirichlet character $\pmod D$. 

All of the lemmas we applied in the proof of Theorem~\ref{theo_mobius} are readily available for sums of the form $\sum_{N\leq n\leq 2N}\chi(n)n^{-s}$ in addition to their unweighted counterparts. Furthermore, in the analogue of \eqref{eq7} for the function $f$ one can use on the right-hand side for example Ramachandra's result \cite{rama}, since any $f$ that we consider can be expressed as the Dirichlet series coefficients of a function of the form $\prod_{\chi\pmod{D}}L(s,\chi)^{\alpha_\chi} F_0(s)$, with $F_0(s)$ an absolutely convergent Dirichlet series for $\Re(s)>1/2$ (see e.g. \cite[proof of Lemma 2.3]{DrappeauTopa}), and hence $f$ is in Ramachandra's class of functions.

One can deal with $k$ being large by grouping the variables $p_i$ into $\leq 30$ products whose sizes are in $[x^{1/30}, x^{1/20}]$: We take $i_0 = 0$, and then define, for $j \geq 1$, $i_j$ recursively so that, for each $j$, we let $i_j$ be the first index for which $p_{i_{j-1}+1} \dotsm p_{i_j} \geq x^{1/30}$. We continue recursion until the step $h$ for which $p_{i_{h-1} +1} \dotsm p_{k} < x^{1/30}$ and write $i_h = k$ (note that we might have $i_{h-1} = k$ as well).  Necessarily $h \leq 30$, so there are less than $K^{30} = o(\log x)$ possibilities for the tuple $(i_1, \dotsc, i_{h-1})$. We can write
\begin{equation}
\label{eq:pjsplit}
\begin{split}
&\frac{1}{k!} \sum_{\substack{n = p_1 \dotsm p_k \\ w < p_i \leq x^{1/60}}} f(p_1)\cdots f(p_k) =  \sum_{h=1}^{30} \sum_{0 = i_0 < i_1 < \dotsb \leq i_h = k} \frac{(i_1-i_0)! (i_2-i_1)! \dotsm (i_h-i_{h-1})!}{i_h!} \\
& \qquad \qquad \qquad \qquad \cdot \sum_{\substack{n=v_1 \dotsm v_h \\ v_j \leq x^{1/20} \\ v_1, \dotsc, v_{h-1} \geq x^{1/30} > v_h}} b_{i_1-i_0, v_1} b_{i_2-i_1, v_2} \dotsm b_{i_{h}-i_{h-1}, v_h},
\end{split}
\end{equation}
where
\[
b_{r, v} := \sum_{\substack{p_1 \dotsm p_r = v \\ w< p_i \leq x^{1/60} \\ p_1 \dotsm p_{r-1} < x^{1/30}}} \frac{f(p_1) \dotsm f(p_{r})}{r!},
\]
and we have $|b_{r,v}| = O(\kappa^r) = O((\log x)^{1000 \log \kappa})$ --- this size bound is sufficient since we show the comparision principle with saving $(\log x)^{-A}$ for any $A > 0$. Inserting~\eqref{eq:pjsplit} into~\eqref{eq:splitS} and splitting each $v_i$ dyadically, this deals with the problem of $k$ being large, which was the only remaining issue in the proof of Theorem~\ref{theo_square}. 
\end{proof}

\begin{remark}
\label{rem:CounterEx}
The proof above crucially used the eventual periodicity of $f(p)$, and actually some conditions on $f$ must be imposed --- for any $\theta \in (0, 1)$ and any large $x$, there are multiplicative functions such that the relation
\begin{equation}
\label{eq:shortlongcomp}
\frac{1}{H}\sum_{x < n \leq x+H} f(n) = (1+o(1)) \frac{1}{x} \sum_{x < n \leq 2x} f(n)
\end{equation}
does not hold for $H = x^\theta$.

This can be demonstrated for instance by letting, for $j = 1, 2$, $f_j=f_{j,x}$ be the multiplicative function defined at prime powers by
\[
f_j(p^k) =
\begin{cases}
(-1)^j \mu(m) & \text{if $p^k \geq H$ and $mp^k \in (x, x+H]$ for some (necessarily unique) $m$}; \\
\mu(p^k) & \text{otherwise.}
\end{cases}
\]
Then
\begin{align*}
\sum_{x < n \leq x+H} (f_2(n) - f_1(n)) &= \sum_{\substack{x < p^k m \leq x+H \\ p^k \geq H}} (f_2(p^k m) - f_1(p^k m)) \\
& \geq \sum_{m \leq x^{\varepsilon}} \mu(m)^2 \sum_{x/m < p \leq (x+H)/m} 1 \gg_\theta H
\end{align*}
at least for $\theta \geq 7/12 + 2\varepsilon$ by Huxley's prime number theorem. If $\theta < 7/12 + 2\varepsilon$, in turn, we may split an interval around $x$ of length $\asymp x^{7/12+2\varepsilon}$ into intervals of length $x^{\theta}$ and note that by the pigeonhole principle we have in any case
\[
\sum_{x' < n \leq x'+H} (f_2(n) - f_1(n)) \gg H
\]
for some $x'\asymp x$. On the other hand, by Hal\'asz's theorem (see e.g. \cite[Theorem 4.5 in Section III.4.3]{TenenbaumBook}), for $j = 1, 2$ and any $x' \asymp x$,
\[
\sum_{x' < n \leq 2x'} f_j(n) = o(x'),
\]
so~\eqref{eq:shortlongcomp} cannot hold for both $f_1$ and $f_2$. If one restricts the support of $f$ to $H$-smooth numbers, then one can hope to prove~\eqref{eq:shortlongcomp} and this is subject of an on-going work of Granville, Harper, Radziwi{\l\l} and the first author.
\end{remark}

\subsection{Almost primes}
\label{ssec:E2}
The proof of Theorem \ref{theo_E2} mostly follows the arguments proof of Theorem \ref{theo_mobius}, but starts with the following simple decomposition for $E_2$ numbers:
\begin{align}\label{eq12}
\sum_{\substack{x<n\leq x+H\\n\in E_2}}1=\sum_{\substack{x<p_1p_2\leq x+H\\\exp((\log \log x)^2)\leq p_1\leq x^{\varepsilon}}} 1+O\left(H\frac{\log \log \log x}{\log x}\right)+O_{\varepsilon}\left(\frac{H}{\log x}\right).   
\end{align}
The validity of this is seen simply by using the Brun--Titchmarsh inequality to estimate the number of those $p_1p_2\in (x,x+H]$ with $p_1<\exp((\log \log x)^{2})$ or $p_1>x^{\varepsilon}$. Here we think of $\varepsilon>0$ as being fixed.

Note that an additional complication compared to the proof of Theorem \ref{theo_mobius} is that the $p_1$ variable may be as small as $\exp((\log \log x)^2)$, and thus we do not have the Vinogradov--Korobov zero-free region for the corresponding Dirichlet polynomial. Therefore, we will need to modify some steps in the proof of Theorem \ref{theo_mobius} for the current proof.

On the right-hand side of \eqref{eq12}, we replace the indicator function of the prime $p_2$ by the von Mangoldt weight $\Lambda(p_2)$ for which we have Heath-Brown's identity. We apply Heath-Brown's identity to $\Lambda(p_2)$ with $k=20$. As in Section \ref{ssec:HB}, we obtain a linear combination of $\ll (\log x)^{2k+2}$ sums of the form \eqref{eq3} (with $2k-1$ replaced by $2k$), where now $R=1/2$, 
\begin{align}\label{eq16}
a_i(n)\equiv\begin{cases} 1\quad \mathrm{or}\quad \log n\quad \mathrm{or}\quad 1_{n=1},\quad i\leq k\\
\mu(n)1_{n \leq (2x)^{1/k}} \quad \mathrm{or} \quad 1_{n=1},\quad k+1\leq i\leq 2k,
\end{cases}
\end{align}
and with the difference that $P=\exp((\log \log x)^2)$ and $Q=x^{\varepsilon}$ in \eqref{eq10}.

We apply the Perron formula lemma (Lemma \ref{le_perron}) with the slight modification that $T_0=x^{0.01}$ and $y_1=x^{0.99}$ (the proof works verbatim with these choices). We are then left with showing firstly that
\begin{align}\label{eq13}
\sum_{\substack{x<n\leq x+y_1\\n\in E_2}}1= y_1 \frac{\log \log x}{\log x} + O\left(y_1 \frac{\log \log \log x}{\log x}\right),\quad y_1=x^{0.99},   
\end{align}
and secondly that
\begin{align}\label{eq14}
\int_{T_0}^{x^{1+\varepsilon/10}/H}\left|P\left(\frac{1}{2}+it\right)N_1\left(\frac{1}{2}+it\right)\cdots N_{2k}\left(\frac{1}{2}+it\right)\right|\, dt\ll_{A} x^{1/2}(\log x)^{-A},
\end{align}
where $P(s)=\sum_{P_1<p\leq \min\{2P_1, Q\}}p^{-s}$, $P_1\in [\exp((\log \log x)^2), x^{\varepsilon}]$ and $N_i(s)=\sum_{N_i<n\leq 2N_i}a_i(n)n^{-s}$ with $a_i(n)$ as in \eqref{eq16}. Furthermore, we have the constraint $P_1N_1\cdots N_{2k}\asymp x$. 

To prove \eqref{eq13}, we can for example apply Huxley's prime number theorem, summing first over the $p_2$ variable in the representation $n = p_1 p_2$ with $p_2 \geq p_1$.

For \eqref{eq14}, we split the integration range $[T_0,x^{1+\varepsilon/10}/H]$ into two sets: the set
\begin{align*}
\mathcal{T}_1:=\{t\in [T_0,x^{1+\varepsilon/10}/H]:\,\, |P(1/2+it)|>P_1^{1/2}(\log x)^{-10A}\} \end{align*}
and its complement, which we call $\mathcal{T}_2$. The integral over $\mathcal{T}_2$ can be bounded precisely as in Section~\ref{ssec:arithmetic}, since then we obtain a sufficient pointwise saving in $|P(1/2+it)|$. 

For the integral over $\mathcal{T}_1$, we must proceed differently. To bound
\begin{align}\label{eq15}
\int_{\mathcal{T}_1}\left|P\left(\frac{1}{2}+it\right)N_1\left(\frac{1}{2}+it\right)\cdots N_{2k}\left(\frac{1}{2}+it\right)\right|\, dt,
\end{align}
we first note that if all the $N_i$ satisfy $N_i\leq (2x)^{1/k}$, then we can apply the same argument as in Lemma \ref{le_bilinear} to obtain the desired bound for this (Let $j$ be such that $N_j=\max_{1\leq i\leq 2k}N_i$. Then we group $\{N_1,\ldots, N_{2k}\}\setminus \{N_j\}$ into two almost equal products of size $\in [x^{0.45-\varepsilon/3}, x^{1/2}]$ and apply Cauchy--Schwarz to the Dirichlet polynomials corresponding to these two products and a pointwise bound to $N_j(s)$). Assume then that some $N_{j_0}$ satisfies $N_{j_0}> (2x)^{1/k},$ so that $N_{j_0}(s)$ is a partial sum of $\zeta(s)$ or $\zeta'(s)$. In that case, we bound \eqref{eq15} by
\begin{align*}
\ll (\log x)^{2k}|\mathcal{T}_1|P_1^{1/2}\prod_{i\in [2k]\setminus \{j_0\}}N_i^{1/2}\cdot \sup_{t\in \mathcal{T}_1}\left|N_{j_0}\left(\frac{1}{2}+it\right)\right|.
\end{align*}
By Weyl's method for bounding exponential sums (see e.g. \cite[Corollary 8.6]{iw-kow}) and the fact that $N_{j_0}\gg x^{1/k}$, we have for $t\in \mathcal{T}_1$ the bound $|N_{j_0}(1/2+it)|\ll N_{j_0}^{1/2-\gamma_{0}}$ for some constant $\gamma_{0}>0$. Thus, it suffices to show that
\begin{align*}
|\mathcal{T}_1|=x^{o(1)}    
\end{align*}
to obtain \eqref{eq14} and hence to finish the proof. From a moment estimate given by \cite[Lemma 8]{mr-annals}, we indeed obtain such a bound for $|\mathcal{T}_1|$ (and in fact the stronger bound $|\mathcal{T}_1|\ll \exp(10A \log x / \log \log x)$). This concludes the proof. \qed

\begin{remark}\label{rem_improve}
A similar manoeuvre as in the proof of Theorem \ref{theo_E2} to handle Dirichlet polynomials of length $\exp((\log \log  x)^2)$ would enable us to take the smaller value $P=\exp((\log \log x)^2)$ in the proof of Theorem \ref{theo_mobius}. This then produces the better error term $O(H(\log \log x)^4/\log x)$ in \eqref{eq0}. Similar improvements could be made to our other results. We leave the details to the interested reader.
\end{remark}

\section*{Acknowledgements}
The authors would like to thank Maksym Radziwi{\l\l} for useful suggestions and in particular for pointing out the application to $E_2$-numbers and the reference~\cite{DrappeauTopa}. The authors are also grateful to the referee for useful comments.

The first author was supported by Academy of Finland grant no. 285894. The second author was supported by a Titchmarsh Fellowship of the University of Oxford.
\bibliographystyle{plain}
\bibliography{mobiuspaper}

\end{document}